\documentclass{amsart}%
\usepackage{eurosym}
\usepackage{amsmath}
\usepackage{amsfonts}
\usepackage{amssymb}
\usepackage{graphicx}
\usepackage{color}
\usepackage{bm}
\usepackage{caption}
\usepackage{subcaption}
\usepackage{verbatim}
\usepackage{hyperref}
\usepackage{bmpsize}
\usepackage{amsxtra,comment,graphicx,psfrag}
\usepackage{epstopdf}
\usepackage{bm,mathrsfs}
\usepackage{mathtools}
\usepackage{booktabs,siunitx}
\usepackage{multirow}
\usepackage{makecell}
\usepackage{amssymb,amsfonts,amsmath}
\usepackage{color}
\usepackage[margin=1.0in]{geometry}
\usepackage{epstopdf}
\usepackage{enumerate,enumitem}
\usepackage{cprotect}
\usepackage{algorithm,algpseudocode}
\usepackage{graphics}
\usepackage{color,cancel}
\usepackage{amsmath,amsfonts,amscd,amssymb,bm}
\usepackage{mathrsfs,cite}
\usepackage{fancyhdr,fancybox}
\usepackage{listings}
\usepackage{url}
\usepackage{enumerate}
\usepackage{marginnote}
\usepackage{array}
\usepackage{comment}
\usepackage{epstopdf}
\usepackage{placeins}
\usepackage{graphicx}%

\providecommand{\U}[1]{\protect\rule{.1in}{.1in}}
\allowdisplaybreaks
\algnewcommand\algorithmicinput{\textbf{Input:}}
\algnewcommand\Input{\item[\algorithmicinput]}
\algnewcommand\algorithmicoutput{\textbf{Output:}}
\algnewcommand\Output{\item[\algorithmicoutput]}
\definecolor{blue}{rgb}{0,0.0,0.9}
\definecolor{red}{rgb}{0.8,0.0,0}

\theoremstyle{plain}
\newtheorem{theorem}{Theorem}[section]
\newtheorem{lemma}[theorem]{Lemma}

\theoremstyle{definition}
\newtheorem{definition}[theorem]{Definition}

\theoremstyle{remark}

\newcommand{\be}{\begin{equation}}
\newcommand{\ee}{\end{equation}}
\newcommand{\bea}{\begin{eqnarray}}
\newcommand{\eea}{\end{eqnarray}}
\newcommand{\beas}{\begin{eqnarray*}}
\newcommand{\eeas}{\end{eqnarray*}}

\includecomment{confidential}
\definecolor{db}{rgb}{0.0470,0,0.5294}
\definecolor{dg}{rgb}{0.0,0.392,0.0}
\definecolor{firebrick}{rgb}{0.698,0.133,0.133}
\definecolor{bl}{rgb}{0.0,0.0,0.0}
\definecolor{linen}{rgb}{0.980,0.941,0.902}
\definecolor{ivory}{rgb}{1.0,1.0,0.941}
\definecolor{aliceblue}{rgb}{0.941,0.973,1.0}
\definecolor{beige}{rgb}{0.961,0.961,0.863}
\definecolor{tan}{rgb}{0.824,0.706,0.549}
\definecolor{lightsteelblue}{rgb}{0.690,0.769,0.871}
\definecolor{paleturquoise}{rgb}{0.686,0.933,0.933}
\definecolor{lightblue}{rgb}{0.678,0.847,0.902}
\definecolor{skyblue}{rgb}{0.529,0.808,0.922}
\definecolor{darkblue}{rgb}{0.00,0.00,0.55}
\definecolor{palegoldenrod}{rgb}{0.933,0.910,0.667}
\definecolor{lightgoldenrod}{rgb}{0.933,0.867,0.510}
\definecolor{lightyellow}{rgb}{1.0,1.0,0.878}
\definecolor{yellow}{rgb}{1.0,1.0,0.0}
\definecolor{lightyellow1}{rgb}{1.0,1.0,0.878}
\definecolor{lemonchiffon}{rgb}{1.0,0.980,0.804}
\definecolor{myyellow}{rgb}{1,1,.9}
\definecolor{darkgreen}{rgb}{0.0,0.392,0.0}
\definecolor{darkviolet}{rgb}{0.580,0.0,0.827}
\definecolor{lightsalmon}{rgb}{1.0,0.627,0.478}
\definecolor{orange}{rgb}{1.0,0.647,0.0}
\newcommand{\vertiii}[1]{{\left\vert\kern-0.25ex\left\vert\kern-0.25ex\left\vert #1
\right\vert\kern-0.25ex\right\vert\kern-0.25ex\right\vert}}
\begin{document}

\title{Data Assimilation with Sparse Observations}

\author{Nan Jiang}
\thanks{Department of Mathematics, University of Florida, Gainesville, FL 32611, \texttt{jiangn@ufl.edu}. The first author was partially supported by the US National Science Foundation grant DMS-2143331.}

\author{William Layton}
\thanks{Department of Mathematics, University of Pittsburgh, Pittsburgh, PA 15260, \texttt{wjl@pitt.edu}. The second author was partially supported by the US National Science Foundation grant DMS-2410893.}

\author{Nanda Nechingal Raghunathan}
\thanks{Department of Mathematics, University of Pittsburgh, Pittsburgh, PA 15260, \texttt{nan158@pitt.edu}. The third author was partially supported by the US National Science Foundation grant DMS-2410893.}

\author{Troy Yang}
\thanks{Department of Mathematics, University of Pittsburgh, Pittsburgh, PA 15260, \texttt{try21@pitt.edu}. The fourth author was partially supported by the US National Science Foundation grant DMS-2410893.}
\subjclass[2020]{Primary 65M12; Secondary 65M60}
\keywords{Nudging, CDA, data assimilation, predictability horizon}

\begin{abstract}
Data assimilation by nudging (also called CDA) yields exponentially decaying
errors and an infinite predictability horizon if the method parameter is large
enough and the observations are frequent enough in time and dense enough in
space. We consider the complementary case of moderate parameters and sparse
and infrequent observations. We prove that assimilation with any data and any
(positive) parameter strictly decreases errors and strictly increases the (now
finite) predictability horizon.

\end{abstract}
\maketitle
\bigskip

\section{Introduction}

Data assimilation by nudging/continuous data assimilation (CDA), originating
in 1964, \cite{L64} and with many connections \cite{ref2}, is supported by a
stunning theoretical result from 2014 in \cite{ref1} that has been elaborated
in many directions, \cite{BM17, LRZ19, ZRSI19, RZ21, GLRVZ21, LHRV23, HRV24,
GLNR24, DLR25,DR22, ADR24,Titi16}. These results assume that the data is
frequent and dense enough, and that the nudging parameter is large enough.
This report analyzes CDA when these assumptions are not satisfied. The case of
sparse, infrequent data and moderate nudging parameter is realistic, and of
practical importance.

In CDA, velocity data/observations, denoted $I_{H}u$, occur on a network with
characteristic spacing ``$H$". These are observations of the exact solution of
the Navier-Stokes equations%
\begin{equation}
u_{t}+u\cdot\nabla u-\nu\Delta u+\nabla p=f(x,t)\qquad\text{and}\qquad
\nabla\cdot u=0, \label{eq:NSE}%
\end{equation}
with appropriate boundary and initial conditions.\ \ The nudged, CDA
approximation chooses parameter $\chi>>1$ and solves%
\begin{equation}
v_{t}+v\cdot\nabla v-\nu\Delta v+\nabla q-\chi I_{H}(u-v)=f(x,t)\qquad
\text{and}\qquad\nabla\cdot v=0. \label{eq:CDA}%
\end{equation}
Herein we assume the system has the same boundary conditions but different
initial conditions. The observation operator $I_{H}$ satisfies stability and
approximability conditions, such as for a projection into a coarse mesh,
piecewise polynomial space. In 2014 Azouani, Olsen and Titi \cite{ref1} proved
that with observations dense enough ($H\leq C(\mathcal{R}e^{-1})<<1$) and
nudging parameter large enough ($\chi\geq C(\mathcal{R}e)>>1$), the error in
the CDA approximation, $||u(t)-v(t)||_{L^{2}}$ $\rightarrow0$ exponentially,
i.e., the CDA predictability horizon $T_{P}=\infty$. Unfortunately, for many
flows the \textquotedblleft dense enough \& large enough\textquotedblright%
\ conditions are unrealistic (see \cite{ref5} for specific details). We
address herein the resulting critical and unanswered question:

\begin{center}
\emph{What is the effect of nudging/CDA with moderate parameter and sparse
observations?}
\end{center}

To date and to our knowledge, the only partial answer proven in \cite{ref4} is
that for a specific discrete-in-time, 2-step, modular data assimilation
algorithm, incorporating any observation strictly decreases errors and
strictly increases the predictability horizon $T_{P}$. We prove herein an
extension of this result to semi-implicit, discrete time realization of
standard nudging/CDA. The result herein has one key feature we conjecture is
improvable. It is for a standard semi-implicit (meaning explicit treatment of
$v\cdot\nabla v$\ and implicit treatment of other terms) time discretization.
The same result for the continuum case of (\ref{eq:CDA}) is an open problem.

Section \ref{sec2} gives a precise formulation of the above question and
presents the result proven in Section \ref{sec3} for the simplest context of
discrete time and continuous space. Section \ref{sec4} proves the basic result
is retained after both space and time discretization. Numerical experiments
are presented in Section \ref{sec5}. Conclusions are in Section \ref{sec6}.

\section{Formulating the Result}

\label{sec2}

The (sufficiently smooth) true solution sought is a solution of the
Navier--Stokes equations (NSE) in a bounded regular domain $\Omega$ in 2D or
3D with body force $f(x,t)$ and kinematic viscosity $\nu$ and subject to
no-slip boundary conditions
\[
u_{t} + u \cdot\nabla u - \nu\Delta u + \nabla p = f(x,t) \qquad\text{and}
\qquad\nabla\cdot u = 0.
\]

We consider discrete time approximations with inexact initial conditions. A
standard semi-implicit method is as follows. Let $\Delta t=$ timestep, $n=$
step number, $t_{n} = n \Delta t$, $v^{n}(x)$ \& $u^{n}(x) \simeq u(x,t_{n})$.
Suppressing the space discretization, the non-nudged, semi-implicit method
considered is: $\nabla\cdot w^{n+1}=0$ and
\[
\frac{w^{n+1}-w^{n}}{\Delta t} + w^{n} \cdot\nabla w^{n} - \nu\Delta w^{n+1} +
\nabla q^{n+1} = f(x,t),
\]
where in general $w^{0} \neq u(0)$. See \cite{ref6} for its numerical analysis.

For the nudged/CDA approximation let $\chi=$ positive nudging parameter, $H=$
observational network spacing and $I_{H}=$ operator that takes observation
data and produces a velocity on $\Omega$. The nudged approximation is:
\[
v^{0} = w^{0} \neq u(0), \qquad\nabla\cdot v^{n+1}=0
\]
and
\[
\frac{v^{n+1}-v^{n}}{\Delta t} + v^{n} \cdot\nabla v^{n} - \nu\Delta v^{n+1} -
\chi I_{H} \bigl(u(t_{n+1}) - v^{n+1}\bigr) + \nabla r^{n+1} = f(x,t).
\]

The usual $L^{2}(\Omega)$ norm and inner product are denoted $\Vert\cdot\Vert
$, $(\cdot,\cdot)$. To formulate the result, we suppose we have sparse ($H$
not small) data at only 1 timestep, $t_{n+1}$, and compare the errors of the
nudged step with the error of the standard, non-nudged step. In other words,
we assume that no data is available at $t^{n}$, so that $w^{n} = v^{n}$. At
$t^{n+1}$, this may no longer hold, since data is available at that time step.
We prove that the effect of the assimilation term is to strictly decrease the
error, measured in the norm
\[
|||\phi|||^{2}:=\Vert\phi\Vert^{2}+\Delta t\nu\Vert\nabla\phi\Vert^{2}.
\]
This decrease means compared to performing the same step with the same
starting velocity $v^{n}$, body force and without the nudging term.

\bigskip

\textbf{Theorem 2.1.} Let $v^{n}=w^{n}$. Suppose the operator $I_{H}$
satisfies
\[
(I_{H}\phi,\phi)\geq0\qquad\text{for all }\phi.
\]
Then, for any positive $H$ and $\chi$, the error is strictly decreased by
CDA:
\[
|||u(t_{n+1})-v^{n+1}|||<|||u(t_{n+1})-w^{n+1}|||
\]
unless $v^{n+1}=w^{n+1}$ and consequently
\[
I_{H}(u(t_{n+1})-v^{n+1})\equiv0.
\]
Specifically,
\begin{equation}
\begin{aligned} &|||u(t_{n+1})-v^{n+1}|||^2 + 2\Delta t\chi\bigl(I_H(u(t_{n+1})-v^{n+1}), (u(t_{n+1})-v^{n+1})\bigr) \\ &\qquad + \|v^{n+1}-w^{n+1}\|^2 = |||u(t_{n+1})-w^{n+1}|||^2. \end{aligned} \tag{2.1}%
\end{equation}

Since CDA/nudging strictly decreases errors, for any reasonable definition of
the predictability horizon based on errors at $t_{n}$ and $t_{n+1}$, it
strictly increases $T_{P}$. For a precise mathematical elaboration of this
condition (based on finite time Lyapunov exponents \cite{ref3}) see
\cite{ref4}.

\section{Proof of Theorem 2.1}

\label{sec3}
\begin{proof}
The proof requires error equations for
\[
e^{n+1}=u(t_{n+1})-w^{n+1} \qquad\text{and} \qquad\varepsilon^{n+1}%
=u(t_{n+1})-v^{n+1}.
\]
To obtain them, integrate the NSE over $t_{n}<t<t_{n+1}$ and rearrange. This
gives
\[
u(t_{n+1})-u(t_{n}) + \Delta t\bigl(u(t_{n})\cdot\nabla u(t_{n})\bigr) -
\nu\Delta u(t_{n+1}) + \nabla p(t_{n+1})\bigr) = \Delta t f^{n+1}+\tau
\]
where
\[
\begin{aligned} \tau :=\;& \Delta t\bigl( u(t_n)\cdot \nabla u(t_n) -\nu\Delta u(t_{n+1}) +\nabla p(t_{n+1}) \bigr) \\ &- \int_{t_n}^{t_{n+1}} u(t)\cdot \nabla u(t) -\nu\Delta u(t) +\nabla p(t)\,dt . \end{aligned}
\]
By subtraction, $e^{n+1},\varepsilon^{n+1}$ satisfy:
\[
\nabla\cdot e^{n+1} = \nabla\cdot\varepsilon^{n+1} = 0,
\]
\begin{equation}
e^{n+1}-e^{n} +\Delta t\bigl(u(t_{n})\cdot\nabla u(t_{n})-w^{n}\cdot\nabla
w^{n}\bigr) -\Delta t\nu\Delta e^{n+1} +\Delta t\nabla\bigl(p(t_{n+1}%
)-q^{n+1}\bigr) =\tau, \tag{3.1}%
\end{equation}
and
\[
\varepsilon^{n+1}-\varepsilon^{n} +\Delta t\bigl(u(t_{n})\cdot\nabla
u(t_{n})-v^{n}\cdot\nabla v^{n}\bigr) -\Delta t\nu\Delta\varepsilon^{n+1}
+\Delta t\chi I_{H} \varepsilon^{n+1} +\Delta t\nabla\bigl(p(t_{n+1}%
)-r^{n+1}\bigr) =\tau.
\]
By assumption, $v^{n}=w^{n}$ so $e^{n}=\varepsilon^{n}$. Equate the LHS of the
two (3.1) using $\tau=\tau$ and cancel like terms. This gives
\[
\varepsilon^{n+1} -\Delta t\nu\Delta\varepsilon^{n+1} +\Delta t\chi
I_{H}\varepsilon^{n+1} +\Delta t\nabla\bigl(p(t_{n+1})-r^{n+1}\bigr) = e^{n+1}
-\Delta t\nu\Delta e^{n+1} +\Delta t\nabla\bigl(p(t_{n+1})-q^{n+1}\bigr).
\]
Take the $L^{2}$ inner product with $\varepsilon^{n+1}$, integrate by parts
and use
\[
\nabla\cdot e^{n+1} = \nabla\cdot\varepsilon^{n+1} = 0
\]
to make the pressure error term drop out. This yields
\begin{equation}
\|\varepsilon^{n+1}\|^{2} +\Delta t\nu\|\nabla\varepsilon^{n+1}\|^{2} +\Delta
t\chi(I_{H}\varepsilon^{n+1},\varepsilon^{n+1}) = (e^{n+1},\varepsilon^{n+1})
+\Delta t\nu(\nabla e^{n+1},\nabla\varepsilon^{n+1}). \tag{3.2}%
\end{equation}
Drop the (now unnecessary) superscript ``$n+1$'' for clarity. The polarization
identity, using the $|||\cdot|||$ notation, for the two terms on the RHS
gives
\[
(e,\varepsilon) +\Delta t\nu(\nabla e,\nabla\varepsilon) = \frac12 |||e|||^{2}
+\frac12 |||\varepsilon|||^{2} -\frac12 |||e-\varepsilon|||^{2}.
\]
Insert this in (3.2), simplifying, multiply by 2 and use
\[
e-\varepsilon=v-w.
\]
This yields the following and thus completes the proof:
\[
|||\varepsilon|||^{2} + 2\Delta t\chi(I_{H}\varepsilon,\varepsilon) +
|||v-w|||^{2} = |||e|||^{2}.
\]
\end{proof}

\section{Fully discrete case}

\label{sec4}

In this section we provide the corresponding result for the fully discrete
schemes. The $L^{p}(\Omega)$ norms and the Sobolev $W^{k}_{p}(\Omega)$ norms
are denoted $\|\cdot\|_{L^{p}}$ and $\|\cdot\|_{W_{p}^{k}}$ respectively.
$H^{k}(\Omega)$ is the Sobolev space $W_{2}^{k}(\Omega)$, with norm
$\|\cdot\|_{k}$.  Let $t_{n}=n\Delta t,n=0,1,2,...,N_{T},$ and $T=N_{T}\Delta t$. We assume the
regularity $ u\in L^{\infty }\left(0,T;H^{k+1}(\Omega )\right)$ and define the following norm
\begin{equation*}
\Vert u\Vert _{\infty ,k}\text{ }:=\sup_{0\leq t\leq T}\Vert u(\cdot, t )\Vert _{k}.
\end{equation*}%

\noindent Let $X$ be the velocity space and $Q$ be the pressure space:
\[
X\text{ }:=(H_{0}^{1}(\Omega))^{d}, \ \text{ }Q\text{ }:=L_{0}^{2}(\Omega).
\]

\noindent For $v\in X$, the Poincar$\acute{e}$ inequality holds,
\[
\Vert v\Vert\leq C_{P}\Vert\nabla v\Vert.
\]

\noindent The space of divergence free functions is given by
\[
V\text{ }:=\{v\in X\text{ }:(\nabla\cdot v,q)=0\text{ , }\forall q\in Q\}.
\]
Define the usual explicitly skew symmetric trilinear form%
\[
b^{\ast}\left(  u,v,w\right)  :=\frac{1}{2}\left(  u\cdot\nabla v,w\right)
-\frac{1}{2}\left(  u\cdot\nabla w,v\right)  .
\]
The same results hold for the alternate, skew symmetric trilinear form
\begin{equation}
b^{\ast}\left(  u,v,w\right)  :=\left(  u\cdot\nabla v+1/2(\nabla\cdot
u)v,w\right)  . \label{eq:bstar}%
\end{equation}

We base our analysis on the standard finite element method (FEM) for the
spatial discretization. The results also extend to many other space
discretizations, including adding various stabilization terms. \ Let
$X_{h}\subset X,Q_{h}\subset Q$ denote conforming velocity, pressure finite
element spaces based on an edge to edge triangulations of $\Omega$ with
maximum triangle diameter $h$. The velocity-pressure FEM spaces $(X_{h}%
,Q_{h})$ are assumed to satisfy the usual discrete inf-sup/$LBB^{h}$
condition, see \cite{G89, Layton08}, for stability of the discrete pressure:
\[
\inf_{q_{h}\in Q_{h}}\sup_{v_{h}\in X_{h}}\dfrac{(q_{h},\nabla\cdot v_{h}%
)}{\Vert q_{h}\Vert\Vert\nabla v_{h}\Vert}\geq\beta^{LBB} >0,
\]
where $\beta^{LBB}$ is independent of $h$. The velocity-pressure FEM spaces
are also assumed to satisfy the following approximation properties:

\begin{gather}
\inf_{v_{h}\in X_{h}}\Vert\nabla\left(  u-v_{h}\right)
\Vert\leq Ch^{k} \Vert u\Vert_{k+1}, \qquad u\in H^{k+1}(\Omega)^{d} \cap X ,\label{estimate}\\
\inf_{q_{h}\in Q_{h}}\Vert  q-q_{h}
\Vert\leq Ch^{k} \Vert p\Vert_{k}, \qquad q\in H^{k}(\Omega)\cap Q.
\end{gather}

\noindent Taylor-Hood elements $(P^{k+1}, P^{k})$,  \cite{G89}, are one commonly used choice of
such velocity-pressure finite element spaces. 

The observation spacing is denoted by $H$. The interpolation operator $I_{H}$
takes observations on a coarse (observational) mesh and interpolates them onto
the fine mesh used for the numerical simulation. If $X_{H}$, $X_{h}$ denote
these coarse (observation) and fine (simulation) subspaces of $X$, we assume
$I_{H}:X\rightarrow X_{h}$ satisfies
\begin{align}
\left\Vert I_{H}(w)-w\right\Vert  &  \leq\hat{C}H\Vert\nabla w\Vert
,\label{bound}\\
\left\Vert I_{H}(w)\right\Vert  &  \leq\breve{C}\Vert w\Vert, \label{bound1}%
\end{align}
for any $w\in H^{1}(\Omega)$ and with $\hat{C}$ and $\breve{C}$ independent of
$H$. Examples of such interpolations include the $L^{2}$ projection onto
piecewise constants and the Scott-Zhang interpolation. The FE implementation
of the nudging term in the fully discrete algorithm appends $\chi\left(
I_{H}(u_{h}),I_{H}(v_{h})\right)  $.

The discretely divergence free subspace of $X_{h}$ is
\[
V_{h}\text{ }:=\left\{  v_{h}\in X_{h}:(\nabla\cdot v_{h},q_{h})=0\text{ , }
\forall q_{h}\in Q_{h}\right\}  .
\]

\begin{definition}
[The Modified H1 Projection] Given $u(t)\in X$, we define the projection $P^{h}
u(t)\in V_{h}$ is defined to be the solution of
\begin{align}
\label{11111a} &  \left(  P^{h} u(t), v_{h}\right)  +\Delta t\nu\left(  \nabla
P^{h} u(t),\nabla v_{h}\right)  = \left(  u(t), v_{h}\right)  +\Delta
t\nu\left(  \nabla u(t),\nabla v_{h}\right)  ,\quad\forall v_{h}\in V_{h}.
\end{align}

\end{definition}

We have the following approximation property for the projection.

\begin{lemma}%
\begin{gather}
||| P^{h} u(t)-u(t)|||\leq2 \sqrt{(C_{P}^{2} +\Delta t\nu)} \inf_{\phi_{h}\in
X_{h}}\Vert\nabla\left(  \phi_{h}-u(t)\right)  \Vert\leq C \sqrt{(C_{P}^{2}
+\Delta t\nu)} \left(  1+\frac{1}{\beta^{LBB}}\right)  h^{k} \Vert
u\Vert_{\infty, k+1}.
\end{gather}

\end{lemma}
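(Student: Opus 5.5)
The first step is to note that $P^{h}u$ is the $|||\cdot|||$-orthogonal projection of $u$ onto $V_h$. The norm $|||\cdot|||$ comes from the inner product
\[
((\phi,\psi)) := (\phi,\psi)+\Delta t\nu(\nabla\phi,\nabla\psi),
\]
and (\ref{11111a}) states exactly that $((u-P^{h}u,v_h))=0$ for all $v_h\in V_h$. Galerkin orthogonality then gives the best approximation property
\[
|||u-P^{h}u|||\le \inf_{v_h\in V_h}|||u-v_h|||.
\]
Existence and uniqueness of $P^h u$ follow from Lax--Milgram on the finite dimensional space $V_h$, since $((\cdot,\cdot))$ is coercive there.

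Next I convert $|||\cdot|||$ into the $H^1$ seminorm. For $\phi\in X$, Poincar\'e gives
\[
|||\phi|||^2\le (C_P^2+\Delta t\nu)\Vert\nabla\phi\Vert^2 .
\]
Hence
\[
|||u-P^hu|||\le\sqrt{C_P^2+\Delta t\nu}\,\inf_{v_h\in V_h}\Vert\nabla(u-v_h)\Vert .
\]

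The main step is to pass from the infimum over $V_h$ to the infimum over $X_h$. This is the standard inf-sup argument (Girault--Raviart, \cite{G89}): since $u\in V$ is divergence free and $(X_h,Q_h)$ satisfies $LBB^h$,
\[
\inf_{v_h\in V_h}\Vert\nabla(u-v_h)\Vert\le\left(1+\frac{C}{\beta^{LBB}}\right)\inf_{\phi_h\in X_h}\Vert\nabla(u-\phi_h)\Vert .
\]
To prove it, take any $\phi_h\in X_h$. By the discrete inf-sup condition, which makes the discrete divergence operator onto, there is $z_h\in X_h$ with $(\nabla\cdot z_h,q_h)=(\nabla\cdot(u-\phi_h),q_h)$ for all $q_h\in Q_h$, and with $\Vert\nabla z_h\Vert\le \beta^{-1}\Vert\nabla\cdot(u-\phi_h)\Vert\le C\beta^{-1}\Vert\nabla(u-\phi_h)\Vert$. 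Then $v_h=\phi_h+z_h\in V_h$, because $\nabla\cdot u=0$. The triangle inequality gives the bound.

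The stated form of the lemma needs some care with constants. The middle expression has constant $2$ and an infimum over $X_h$ with no $\beta$, while $\beta$ appears only in the final bound. So I would write the middle quantity as the infimum over $V_h$, presumably what is intended, or else absorb the factor $(1+C/\beta^{LBB})$ into the last inequality. Finally, apply (\ref{estimate}) with $u(t)\in H^{k+1}$ and take the supremum over $t$ to get $Ch^k\Vert u\Vert_{\infty,k+1}$. Getting the $V_h$-versus-$X_h$ bookkeeping and constants to match the statement is where I expect the difficulty; the remaining steps are routine.
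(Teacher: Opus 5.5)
Your proof is correct and has the same structure as the paper's: quasi-best approximation from $V_h$ in $|||\cdot|||$, Poincar\'e, the inf-sup passage from $V_h$ to $X_h$, then (\ref{estimate}). The one difference is that the paper tests (\ref{11111b}) with $\beta_h=\phi_h-P^hu$ to get $|||\beta_h|||\le|||\alpha|||$ and then uses the triangle inequality, which is where its factor $2$ comes from. Your orthogonality argument, equivalently $|||\alpha-\beta_h|||^2=|||\alpha|||^2-|||\beta_h|||^2$, gives the sharper constant $1$.

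Your reading of the middle term as an infimum over $V_h$ is exactly what the paper's proof establishes before it inserts $(1+1/\beta^{LBB})$, so the $X_h$ there is a slip in the statement. If you instead keep $X_h$ in the middle, the $\beta^{LBB}$ factor has to go into the first inequality, not the last.
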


\begin{proof}
Let $\phi_{h}$ be any function in $V_{h}$. We rewrite%

\begin{gather}
P^{h} u(t)-u(t)=\left(  \phi_{h}-u(t)\right)  -\left(  \phi_{h}-P^{h}
u(t)\right)  =\alpha-\beta_{h}.
\end{gather}

\noindent Then \eqref{11111a} can be written as
\begin{align}
\label{11111b} &  \left(  \alpha, v_{h}\right)  +\Delta t\nu\left(
\nabla\alpha,\nabla v_{h}\right)  = \left(  \beta_{h}, v_{h}\right)  +\Delta
t\nu\left(  \nabla\beta_{h},\nabla v_{h}\right)  ,\quad\forall v_{h}\in V_{h}.
\end{align}

\noindent Setting $v_{h}=\beta_{h}$ we have%

\begin{align}
\label{11111c} &  |||\beta_{h}|||^{2} =\Vert\beta_{h}\Vert^{2} +\Delta
t\nu\Vert\nabla\beta_{h}\Vert^{2} =\left(  \alpha, \beta_{h}\right)  +\Delta
t\nu\left(  \nabla\alpha,\nabla\beta_{h}\right)  \leq\Vert\alpha\Vert
\Vert\beta_{h}\Vert+\Delta t\nu\Vert\nabla\alpha\Vert\Vert\nabla\beta_{h}%
\Vert\leq|||\alpha|||\,|||\beta_{h}|||,
\end{align}

\noindent and thus%

\begin{align}
\label{11111d} &  |||\beta_{h}|||\leq|||\alpha|||=\sqrt{\Vert\alpha\Vert^{2}
+\Delta t\nu\Vert\nabla\alpha\Vert^{2}}\leq\sqrt{(C_{P}^{2} +\Delta t\nu)}
\Vert\nabla\alpha\Vert.
\end{align}

\noindent So%

\begin{gather}
||| P^{h} u(t)-u(t)|||=|||\alpha-\beta_{h} ||| \leq|||\alpha|||+|||\beta_{h}
|||\leq2 \sqrt{(C_{P}^{2} +\Delta t\nu)} \Vert\nabla\left(  \phi
_{h}-u(t)\right)  \Vert, \qquad\forall\phi_{h}\in V_{h}.
\end{gather}

\noindent Taking the infimum over $\phi_{h}\in V_{h}$ and applying
\eqref{estimate} yields%

\begin{gather}
||| P^{h} u(t)-u(t)||| \leq2 \sqrt{(C_{P}^{2} +\Delta t\nu)} \inf_{\phi_{h}\in
V_{h}}\Vert\nabla\left(  \phi_{h}-u(t)\right)  \Vert\nonumber\\
\leq2 \sqrt{(C_{P}^{2} +\Delta t\nu)} \left(  1+\frac{1}{\beta^{LBB}}\right)
\inf_{\phi_{h}\in X_{h}}\Vert\nabla\left(  \phi_{h}-u(t)\right)  \Vert\\
\leq C \sqrt{(C_{P}^{2} +\Delta t\nu)} \left(  1+\frac{1}{\beta^{LBB}}\right)
h^{k} \Vert u\Vert_{\infty, k+1}.\nonumber
\end{gather}

\noindent This concludes the proof.
\end{proof}

\subsection{Fully discrete Methods}

We now present the fully discrete methods for the nudging and non-nudging
cases respectively. The nonlinear terms are treated with fully explicit, first
order approximations. However, the proof works for any higher order
approximations (for the nonlinear terms) without any modifications.

\noindent\textbf{Nudging Method: }\noindent Given $v_{h}^{n}$, find
$v_{h}^{n+1}\in X_{h}$, $\lambda_{h}^{n+1}\in Q_{h}$ satisfying
\begin{align}
&  \left(\frac{v_{h}^{n+1}-v_{h}^{n}}{\Delta t},\phi_{h}\right)+b^{\ast}(v_{h}^{n}%
,v_{h}^{n},\phi_{h})-(\lambda_{h}^{n+1},\nabla\cdot\phi_{h}%
)\label{fullydiscrete}\\
&  +\chi(I_{H}(v_{h}^{n+1}-u^{n+1}),I_{H}\phi_{h})+\nu(\nabla v_{h}%
^{n+1},\nabla\phi_{h})=(f^{n+1},\phi_{h})\text{, }\qquad\forall\phi_{h}\in
X_{h},\nonumber\\
&  (\nabla\cdot v_{h}^{n+1},q_{h})=0,\qquad\forall q_{h}\in Q_{h}.\nonumber
\end{align}

\noindent which is equivalent to: \textit{find} $v_{h}^{n+1}\in V_{h}$ such that%

\begin{align}
&  \left(\frac{v_{h}^{n+1}-v_{h}^{n}}{\Delta t},\phi_{h}\right)+b^{\ast}(v_{h}^{n}%
,v_{h}^{n},\phi_{h})\label{fullydiscrete0}\\
&  +\chi(I_{H}(v_{h}^{n+1}-u^{n+1}),I_{H}\phi_{h})+\nu(\nabla v_{h}%
^{n+1},\nabla\phi_{h})=(f^{n+1},\phi_{h})\text{, }\qquad\forall\phi_{h}\in
V_{h}.\nonumber
\end{align}

\noindent\textbf{Method without Nudging:} \noindent Given $w_{h}^{n}$, find
$w_{h}^{n+1}\in X_{h}$, $\tilde{\lambda} _{h}^{n+1}\in Q_{h}$ satisfying
\begin{align}
&  \left(\frac{w_{h}^{n+1}-w_{h}^{n}}{\Delta t},\phi_{h}\right)+b^{\ast}(w_{h}^{n}%
,w_{h}^{n},\phi_{h})-(\tilde{\lambda} _{h}^{n+1},\nabla\cdot\phi
_{h})\label{fullydiscrete1}\\
&  +\nu(\nabla w_{h}^{n+1},\nabla\phi_{h})=(f^{n+1},\phi_{h})\text{, }%
\qquad\forall\phi_{h}\in X_{h},\nonumber\\
&  (\nabla\cdot w_{h}^{n+1},q_{h})=0,\qquad\forall q_{h}\in Q_{h},\nonumber
\end{align}

\noindent which is equivalent to: \textit{find} $w_{h}^{n+1}\in V_{h}$ such that%

\begin{align}
&  \left(\frac{w_{h}^{n+1}-w_{h}^{n}}{\Delta t},\phi_{h}\right)+b^{\ast}(w_{h}^{n}%
,w_{h}^{n},\phi_{h}) +\nu(\nabla w_{h}^{n+1},\nabla\phi_{h})=(f^{n+1},\phi
_{h})\text{, }\qquad\forall\phi_{h}\in V_{h}. \label{fullydiscrete2}%
\end{align}

\begin{theorem}
Assume $v_{h}^{n} = w_{h}^{n}$. Then for any positive $H$ and $\chi$, we have
\begin{gather}
||| v_{h}^{n+1}-u(t_{n+1})|||^{2} \leq||| w_{h}^{n+1}-u(t_{n+1})|||^{2}
+C\Delta t\chi\breve{C}^{2} (C_{P}^{2} +\Delta t\nu)\left(  1+\frac{1}%
{\beta^{LBB}}\right)  ^{2} h^{2k} \Vert u\Vert_{\infty, k+1}^{2}.\nonumber
\end{gather}

\end{theorem}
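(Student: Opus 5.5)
Mimicking the proof of Theorem 2.1, I will subtract the two discrete schemes rather than form separate error equations against the true solution. Since $v_h^n=w_h^n$, subtracting (\ref{fullydiscrete2}) from (\ref{fullydiscrete0}) cancels the time-level-$n$ terms, the explicit nonlinear terms $b^{\ast}(v_h^n,v_h^n,\phi_h)=b^{\ast}(w_h^n,w_h^n,\phi_h)$, and the body force. For all $\phi_h\in V_h$ this leaves
\[
(v_h^{n+1}-w_h^{n+1},\phi_h)+\Delta t\nu(\nabla(v_h^{n+1}-w_h^{n+1}),\nabla\phi_h)+\Delta t\chi(I_H(v_h^{n+1}-u^{n+1}),I_H\phi_h)=0 .
\]
The exact solution is not discretely divergence free. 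I therefore insert the modified $H^1$ projection $P^h u^{n+1}\in V_h$ and write
\[
\varepsilon_h=v_h^{n+1}-P^hu^{n+1},\qquad e_h=w_h^{n+1}-P^hu^{n+1},
\]
both in $V_h$, with $\varepsilon_h-e_h=v_h^{n+1}-w_h^{n+1}$. The identity becomes
\[
(\varepsilon_h,\phi_h)+\Delta t\nu(\nabla\varepsilon_h,\nabla\phi_h)+\Delta t\chi(I_H\varepsilon_h,I_H\phi_h)=(e_h,\phi_h)+\Delta t\nu(\nabla e_h,\nabla\phi_h)-\Delta t\chi(I_H(P^hu^{n+1}-u^{n+1}),I_H\phi_h).
\]

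Next I take $\phi_h=\varepsilon_h$ and apply polarization in the $|||\cdot|||$ inner product, exactly as in the proof of Theorem 2.1:
\[
|||\varepsilon_h|||^2+|||v_h^{n+1}-w_h^{n+1}|||^2+2\Delta t\chi\|I_H\varepsilon_h\|^2=|||e_h|||^2-2\Delta t\chi(I_H(P^hu^{n+1}-u^{n+1}),I_H\varepsilon_h).
\]
Cauchy--Schwarz and Young bound the last term by $\Delta t\chi\|I_H\varepsilon_h\|^2+\Delta t\chi\|I_H(P^hu^{n+1}-u^{n+1})\|^2$, and the first piece is absorbed on the left. Then (\ref{bound1}) and the Lemma give
\[
\|I_H(P^hu^{n+1}-u^{n+1})\|^2\le\breve C^2\|P^hu^{n+1}-u^{n+1}\|^2\le\breve C^2|||P^hu^{n+1}-u^{n+1}|||^2\le C\breve C^2(C_P^2+\Delta t\nu)\Big(1+\frac1{\beta^{LBB}}\Big)^2h^{2k}\|u\|_{\infty,k+1}^2 .
\]
This yields $|||\varepsilon_h|||^2\le|||e_h|||^2+(\text{that term})\cdot\Delta t\chi$.

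The main obstacle is converting this discrete-error statement into one about the true errors $|||v_h^{n+1}-u(t_{n+1})|||$ and $|||w_h^{n+1}-u(t_{n+1})|||$ without picking up extra projection terms. The defining relation (\ref{11111a}) of $P^h$ handles this. It says $u-P^hu$ is $|||\cdot|||$-orthogonal to $V_h$, so the Pythagorean identity gives
\[
|||v_h^{n+1}-u^{n+1}|||^2=|||\varepsilon_h|||^2+|||P^hu^{n+1}-u^{n+1}|||^2,
\]
and the same identity holds for $w_h^{n+1}$. Adding $|||P^hu^{n+1}-u^{n+1}|||^2$ to both sides of the discrete inequality, and dropping the nonnegative terms $|||v_h^{n+1}-w_h^{n+1}|||^2$ and $\Delta t\chi\|I_H\varepsilon_h\|^2$, gives the stated estimate. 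This choice of projection is what makes the argument close cleanly, which is why the paper introduces it.
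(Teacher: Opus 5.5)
Your proposal is correct and is essentially the paper's proof. You reach the same identity as \eqref{eq:err2}, test with $v_h^{n+1}-P^hu^{n+1}$, apply polarization and Cauchy--Schwarz/Young with \eqref{bound1}, and recombine using the modified $H^1$ projection and the Lemma. The differences are cosmetic: you subtract the two schemes directly instead of first writing error equations against the true solution (whose consistency terms cancel anyway). You also justify explicitly the splitting $|||v_h^{n+1}-u^{n+1}|||^2=|||v_h^{n+1}-P^hu^{n+1}|||^2+|||P^hu^{n+1}-u^{n+1}|||^2$ via the $|||\cdot|||$-orthogonality built into \eqref{11111a}, which the paper uses without comment.
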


\begin{proof}
The true solutions of the NSE $u$ satisfy
\begin{gather}
\left(\frac{u^{n+1}-u^{n}}{\Delta t},\phi_{h}\right)+b^{\ast}(u^{n+1},u^{n+1},\phi
_{h})+\nu(\nabla u^{n+1},\nabla\phi_{h})-(p^{n+1},\nabla\cdot\phi
_{h})\label{eq:convtrue}\\
=(f^{n+1},\phi_{h})+Intp(u^{n+1};\phi_{h})\text{ , }\qquad\text{for all }%
\phi_{h}\in V_{h}\text{ ,}\nonumber
\end{gather}
where $Intp(u^{n+1};\phi_{h})$ is a consistency error
\[
Intp(u^{n+1};\phi_{h})
=\left(\frac{u^{n+1}-u^{n}}{\Delta t}-\partial_{t}%
u(t^{n+1}),\phi_{h}\right)\text{ .}%
\]

\noindent Subtracting \eqref{eq:convtrue} from \eqref{fullydiscrete0} and
\eqref{fullydiscrete2} respectively yields two error equations for%

\[
e^{n+1}=w_{h}^{n+1}-u(t_{n+1}) \qquad\text{and} \qquad\varepsilon^{n+1}%
=v_{h}^{n+1}-u(t_{n+1}).
\]

\begin{align}
&  \left(\frac{\varepsilon^{n+1}-\varepsilon^{n}}{\Delta t},\phi_{h}\right)+\nu
(\nabla\varepsilon^{n+1},\nabla\phi_{h})-b^{\ast}(u^{n+1},u^{n+1},\phi
_{h})+b^{\ast}(v^{n}_{h},v_{h}^{n},\phi_{h})\nonumber\\
&  +(p^{n+1},\nabla\cdot\phi_{h})+\chi(I_{H}\varepsilon^{n+1},I_{H}\phi_{h})
=-Intp(u^{n+1};\phi_{h})\text{ .} \label{eq:err}%
\end{align}

\begin{align}
&  \left(\frac{e^{n+1}-e^{n}}{\Delta t},\phi_{h}\right)+\nu(\nabla e^{n+1},\nabla\phi
_{h})-b^{\ast}(u^{n+1},u^{n+1},\phi_{h})+b^{\ast}(w^{n}_{h},w_{h}^{n},\phi
_{h})\nonumber\\
&  +(p^{n+1},\nabla\cdot\phi_{h}) =-Intp(u^{n+1};\phi_{h})\text{ .}
\label{eq:err0}%
\end{align}

\noindent Equating the left hand sides the above two equations and assuming
$v^{n}_{h}=w^{n}_{h}$ and thus $\varepsilon^{n}=e^{n}$, we have%

\begin{align}
&  \left(\frac{\varepsilon^{n+1}}{\Delta t},\phi_{h}\right)+\nu(\nabla\varepsilon
^{n+1},\nabla\phi_{h}) +\chi(I_{H}\varepsilon^{n+1},I_{H}\phi_{h})
=\left(\frac{e^{n+1}}{\Delta t},\phi_{h}\right)+\nu(\nabla e^{n+1},\nabla\phi_{h}) \text{
.} \label{eq:err1}%
\end{align}

\noindent Let $P^{h} u^{n}\in V_{h}$ be the modified H1 projection of $u^{n}%
$onto $V_{h}$ and denote%
\begin{gather*}
e^{n}=w_{h}^{n}-u^{n}=\left(  P^{h} u^{n}-u^{n}\right)  +\left(  w_{h}%
^{n}-P^{h} u^{n}\right)  =\eta^{n}+\zeta_{h}^{n}\text{ ,}\\
\epsilon^{n}=v_{h}^{n}-u^{n}=\left(  P^{h} u^{n}-u^{n}\right)  +\left(
v_{h}^{n}-P^{h} u^{n}\right)  =\eta^{n}+\xi_{h}^{n}\text{ .}%
\end{gather*}
\noindent We then have $\zeta_{h}^{n}\in V_{h}, \xi_{h}^{n}\in V_{h}$.
\eqref{eq:err1} becomes%

\begin{align}
&  \left(\frac{\xi_{h}^{n+1}}{\Delta t},\phi_{h}\right)+\nu(\nabla\xi_{h}^{n+1}%
,\nabla\phi_{h}) +\chi(I_{H}\xi_{h}^{n+1},I_{H}\phi_{h}) +\chi(I_{H}\eta
^{n+1},I_{H}\phi_{h})\nonumber\\
&  \qquad
=\left(\frac{\zeta_{h}^{n+1}}{\Delta t},\phi_{h}\right)+\nu(\nabla\zeta
_{h}^{n+1},\nabla\phi_{h}) \text{ .} \label{eq:err2}%
\end{align}

\noindent Setting $\phi_{h}=\xi_{h}^{n+1}$ in $V_{h}$ yields%

\begin{gather}
\|\xi_{h}^{n+1}\|^{2} +\Delta t \nu\|\nabla\xi_{h}^{n+1}\|^{2} +\Delta
t\chi(I_{H}\xi_{h}^{n+1},I_{H}\xi_{h}^{n+1}) = (\zeta_{h}^{n+1},\xi_{h}^{n+1})
+\Delta t\nu(\nabla\zeta^{n+1},\nabla\xi^{n+1})\\
-\Delta t\chi(I_{H}\eta^{n+1},I_{H}\xi_{h}^{n+1}) =\frac{1}{2} \Vert\zeta
_{h}^{n+1}\Vert^{2} + \frac{1}{2}\Vert\xi_{h}^{n+1}\Vert^{2} -\frac{1}{2}
\Vert\zeta_{h}^{n+1}-\xi_{h}^{n+1}\Vert^{2}\nonumber\\
+\Delta t\nu\left(  \frac{1}{2} \Vert\nabla\zeta_{h}^{n+1}\Vert^{2} + \frac
{1}{2}\Vert\nabla\xi_{h}^{n+1}\Vert^{2} -\frac{1}{2} \Vert\nabla\zeta
_{h}^{n+1}-\nabla\xi_{h}^{n+1}\Vert^{2}\right)  -\Delta t\chi(I_{H}\eta
^{n+1},I_{H}\xi_{h}^{n+1}).\nonumber
\end{gather}

\noindent Using the fact that $\zeta_{h}^{n+1}-\xi_{h}^{n+1}=e^{n+1}%
-\varepsilon^{n+1}=w_{h}^{n+1}-v_{h}^{n+1}$, the above equation can be
rewritten as%

\begin{gather}
||| \xi_{h}^{n+1}|||^{2} +2\Delta t\chi\Vert I_{H}\xi_{h}^{n+1}\Vert^{2} +|||
w_{h}^{n+1}-v_{h}^{n+1} |||^{2} =||| \zeta_{h}^{n+1}|||^{2} -2\Delta
t\chi(I_{H}\eta^{n+1},I_{H}\xi_{h}^{n+1}).
\end{gather}

\noindent We apply Cauchy-Schwarz and Young's inequalities and \eqref{bound1}
to bound the last term in the above equation.
\begin{align}
-\chi\left(  I_{H}\eta^{n+1},I_{H}\xi_{h}^{n+1}\right)   &  \leq\chi\left\Vert
I_{H}\eta^{n+1}\right\Vert \left\Vert I_{H}\xi_{h}^{n+1}\right\Vert
\nonumber\\
&  \leq\dfrac{1}{2}\chi\left\Vert I_{H}\eta^{n+1}\right\Vert ^{2}+ \frac{1}%
{2}\chi\left\Vert I_{H}\xi_{h}^{n+1}\right\Vert ^{2}\\
&  \leq\dfrac{1}{2}\chi\breve{C}^{2}\left\Vert \eta^{n+1}\right\Vert
^{2}+\frac{1}{2}\chi\left\Vert I_{H}\xi_{h}^{n+1}\right\Vert ^{2}.\nonumber
\end{align}

\noindent Then we have
\begin{gather}
|||\xi_{h}^{n+1}|||^{2} +\Delta t\chi\Vert I_{H}\xi_{h}^{n+1}\Vert^{2} +|||
w_{h}^{n+1}-v_{h}^{n+1} |||^{2} \leq|||\zeta_{h}^{n+1}|||^{2} + \Delta
t\chi\breve{C}^{2}\left\Vert \eta^{n+1}\right\Vert ^{2},
\end{gather}

\noindent which can be further reduced to%

\begin{gather}
|||\xi_{h}^{n+1}|||^{2} \leq|||\zeta_{h}^{n+1}|||^{2} + \Delta t\chi\breve
{C}^{2}\left\Vert \eta^{n+1}\right\Vert ^{2}.
\end{gather}

\noindent Recall that $e^{n}=\eta^{n}+\zeta_{h}^{n}, \varepsilon^{n}=\eta
^{n}+\xi_{h}^{n}$. We have%

\begin{gather}
|||\varepsilon^{n+1}|||^{2} =|||\xi_{h}^{n+1}|||^{2} + |||\eta^{n+1} |||^{2}\\
\leq|||\zeta_{h}^{n+1}|||^{2} + \Delta t\chi\breve{C}^{2}\left\Vert \eta
^{n+1}\right\Vert ^{2} + |||\eta^{n+1} |||^{2}\nonumber\\
\leq||| e^{n+1}|||^{2} + \Delta t\chi\breve{C}^{2}\left\Vert \eta
^{n+1}\right\Vert ^{2}\nonumber\\
\leq||| e^{n+1}|||^{2} + C\Delta t\chi\breve{C}^{2} (C_{P}^{2} +\Delta
t\nu)\left(  1+\frac{1}{\beta^{LBB}}\right)  ^{2} h^{2k} \Vert u\Vert_{\infty,
k+1}^{2}.\nonumber
\end{gather}

\end{proof}

\section{Experiments}

\label{sec5}

In this section, three numerical tests are conducted to investigate the effect
of nudging with sparse data in space and infrequent (at only 1 timestep) in
time. The basic experiments involve comparing three simulations:

\begin{itemize}
\item Simulation 1: A finer mesh simulation with a higher accuracy time
discretization to provide a reference solution. The initial condition for this
simulation is considered to be correct.

\item Simulation 2: A coarser mesh simulation with perturbed initial
condition. The time discretization uses a fully explicit treatment of the
nonlinearity in accord with the theory. This simulation is the non-nudged result.

\item Simulation 3: At each time step of simulation 2, a nudging term is added
and the time step is repeated. This means simulation 3 only investigates the
(non-accumulated) effect of data added each step to the result of the no-data
simulation, also in accord with the theory.
\end{itemize}

These 3 basic simulations lead to more natural questions beyond the theory
herein. The first is whether the improvement persists with linearly implicit
treatment of the nonlinear term of the NSE. We address this question in
Experiments 5.2 and 5.3. The second is how much better the CDA errors would be
if the error improvement is allowed to accumulate at each step. We address
this question in Experiment 5.3.

The first test computes the above second and third simulations on a unit
square with known exact solution replacing simulation 1. The nonlinear term is
fully explicit. The exact solution is used to provide the reference data for
the nudging term. The second test computes solutions for the pipe cavity
problem at low and high Reynolds numbers. The nonlinear term is fully explicit
for low Reynolds number and semi-implicit for high Reynolds number. The third
test simulates rotational flow in a domain with an offset cylinder at a higher
Reynolds number. The nonlinear term for the third test is semi-implicit. Tests
2 and 3 have no known exact solutions, so simulation 1 above was used to
provide the reference data.

In tests (not included herein) with sparse data at only one timestep and small $\chi$, the improvement seen was small, as expected. To produce figures with a visual difference, the tests increased $\chi$ but kept the data sparse and only at one timestep.

\subsection{Analytical solution on unit square}

The first test is adapted from\ \cite{ref5}. We consider an analytical
solution on unit square domain $\Omega=(0,1)^{2}$. The analytical velocity and
pressure are
\[
u(x,y,t)=e^{t}(\cos y,\sin x)^{T}\text{ and }p(x,y,t)=(x-y)(1+t).
\]
We compute the forcing term $f(x,t)$ using the analytical solutions.
Simulations 2 and 3 are computed on a fine mesh with 43,266 degrees of freedom
(dof). We assume $I_{H}$ is $L^{2}$ projection onto a continuous piecewise
quadratic velocity space. The observation spacing $H$ is $H=0.0441942$. We set
$\Delta t=1/16$ and $\chi=10^{4}$. We take Scott-Vogelius $(P2-P1dc)$ finite
element pair with a barycenter refined mesh, e.g. \cite{GS19}. The initial
conditions for $u$ is $u(x,y,0)$ and for $w$ and $v$ are $w^{0}=v^{0}%
=u(x,y,0)+(10^{-3},10^{-3})^{T}$. The nonlinear term for both simulations is
fully explicit.

The simulation results are recorded for relative errors (since the solution
grows exponentially) in the weighted norm $|||\phi|||^{2}$. The errors are
computed relative to the exact solution.

\begin{figure}[ptbh]
\centering
\includegraphics[width=0.75\linewidth]{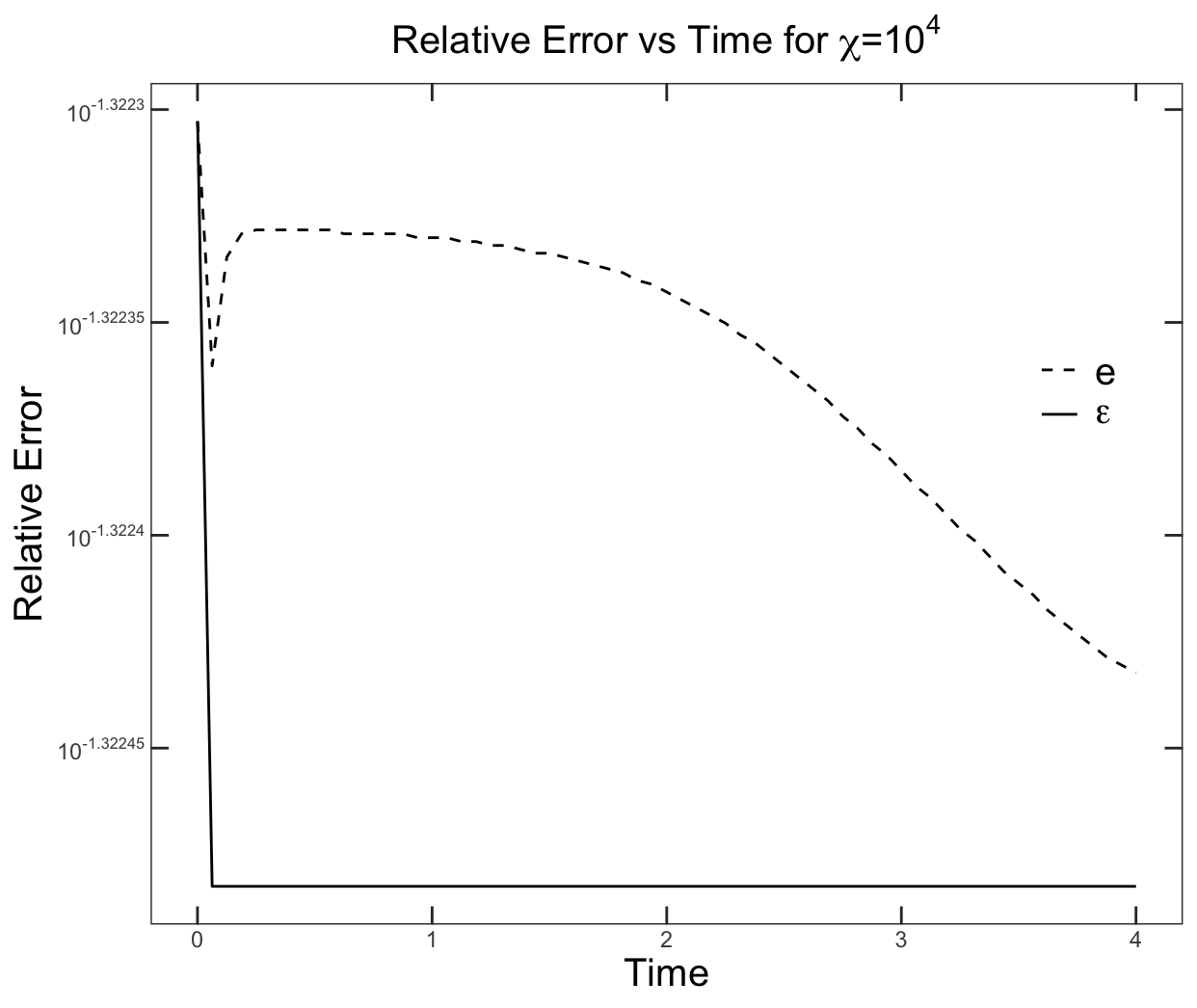}\caption{Nudging decreases
error. Decrease is small for stable problems.}%
\label{fig:Test1}%
\end{figure}In this test, the exact solution is smooth and (seems to be)
stable. Thus, the test's main points are to verify correctness of
implementation and that the added nudging term does increase accuracy. At each
time step, the nudging correction with sparse and infrequent data does
increase accuracy but, for this stable problem, the increase is minor.

\FloatBarrier

\subsection{Pipe cavity flow problem}

The second test is adapted from Ervin, Layton, and Maubach \cite{L00} and, due
to the boundary conditions, is beyond the theorem proven herein. The domain is
a horizontal pipe of length 8 and height 1 with a rectangular cavity on top of
the pipe of length 6 and height 5. The domain is defined by
\[
\Omega=([0,8]\times\lbrack0,1])\cup([1,7]\times\lbrack1,6]).
\]
We impose no-slip boundary conditions on the walls and no force $f(x,y,t)=0$.
We have inflow of fluid, $u(x,y)=\min(t,1)(4y(1-y))^{T}$, from the left end of
the pipe and outflow at the right end. We apply the standard do-nothing
condition, \cite{HRT96}, at the outflow of the pipe at $x=8$. Others, e.g.
\cite{ABL16}, are certainly possible.

The coarse solutions are computed on a mesh with 10,433 dof. We set the final
time $T=10$, time step size $\Delta t=1/500,\nu=10^{-2},L=1,U=1$, and
$Re=\frac{LU}{\nu}$. The initial velocity conditions are $w^{0}=v^{0}%
=(2\times10^{-3}, 2\times10^{-3})^{T}$. This initial condition is incompatible
with the inflow boundary condition so there is a large error for a few initial
timesteps. \ We set $I_{H}$ to be $L^{2}$ projection and set $\chi=10^{4}$.
The observation spacing $H$ is $H=0.31241$. We take Taylor-Hood $(P2-P1)$
finite element pair for the velocity and pressure spaces.

The reference solution $u$ is approximated using Direct Numerical Simulation
(DNS), computed on a finer mesh with 41,194 dof and with the second order BDF2
time discretization. Experiment parameters for the DNS solution are kept the
same as parameters for the coarse solutions. The initial velocity condition is
$u(x,y,0)=(10^{-3},10^{-3})^{T}$. The nonlinear term for the reference
simulation is fully explicit with AB2: $(2u^{n}-u^{n-1})\cdot\nabla
(2u^{n}-u^{n-1})$.

The simulation results are recorded for errors in the weighted norm
$|||\phi|||^{2}$ and computed relative to the reference solutions.

\begin{figure}[ptbh]
\centering
\includegraphics[width=0.75\linewidth]{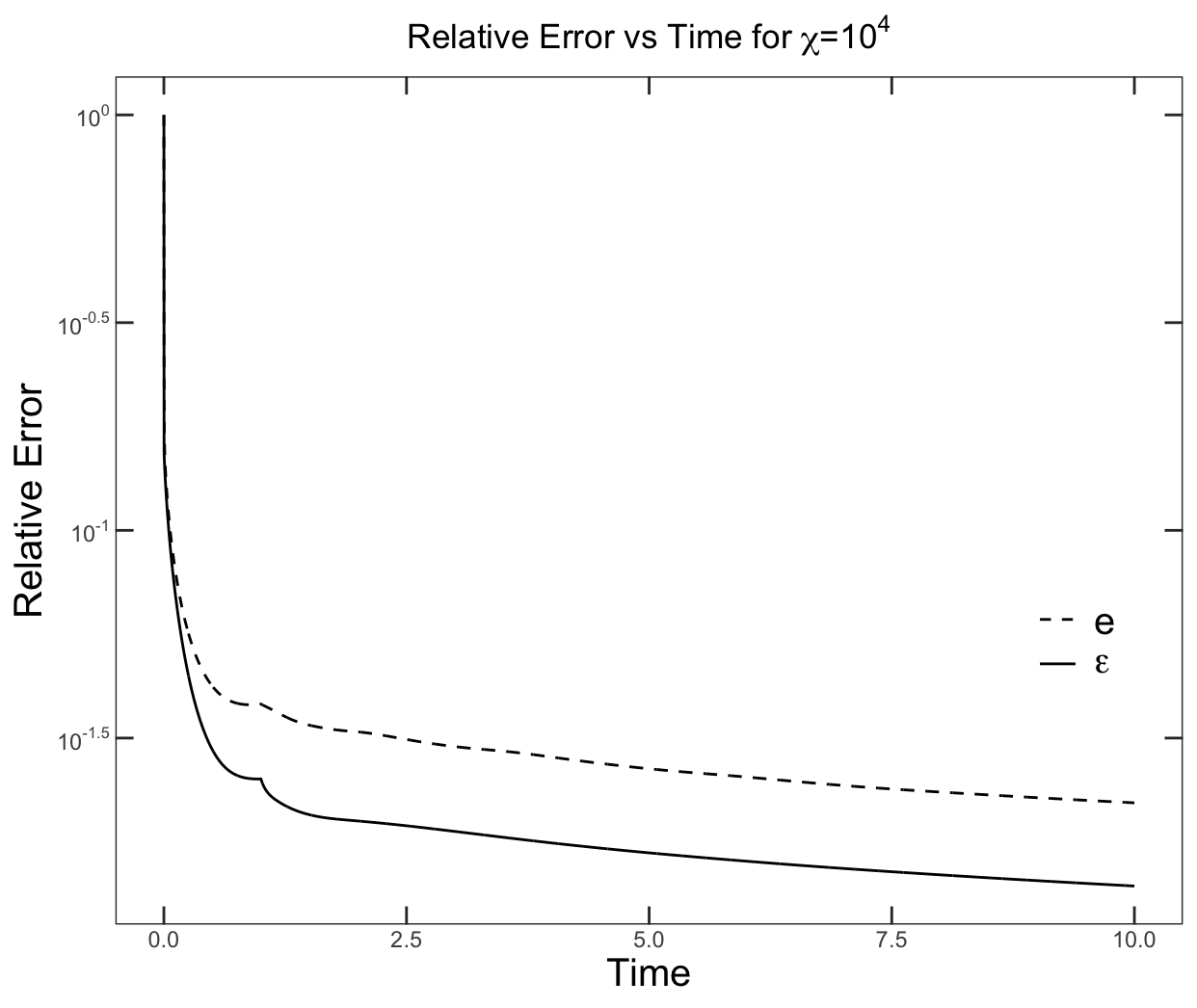}\caption{At each time step,
the error curve with nudging is below the error curve without nudging.}%
\label{fig:Test2}%
\end{figure}

Figure 5.2 shows the errors with and without data begin at $\mathcal{O}(1)$
due to the incompatibility of the initial condition with the inflow data. Then
errors decrease over time. The error curve without nudging reaches
$\mathcal{O}(10^{-1.5})$ at $t=1$. The error curve with nudging reaches around
$\mathcal{O}(10^{-2})$ at $t=1$. The error curve with nudging is strictly
below the error curve without nudging. Adding sparse and infrequent data
decreases error.

\begin{figure}[htbp]
    \centering
    \begin{subfigure}[b]{0.45\textwidth}
        \centering
        \caption*{$t=0$} 
        \includegraphics[width=\textwidth]{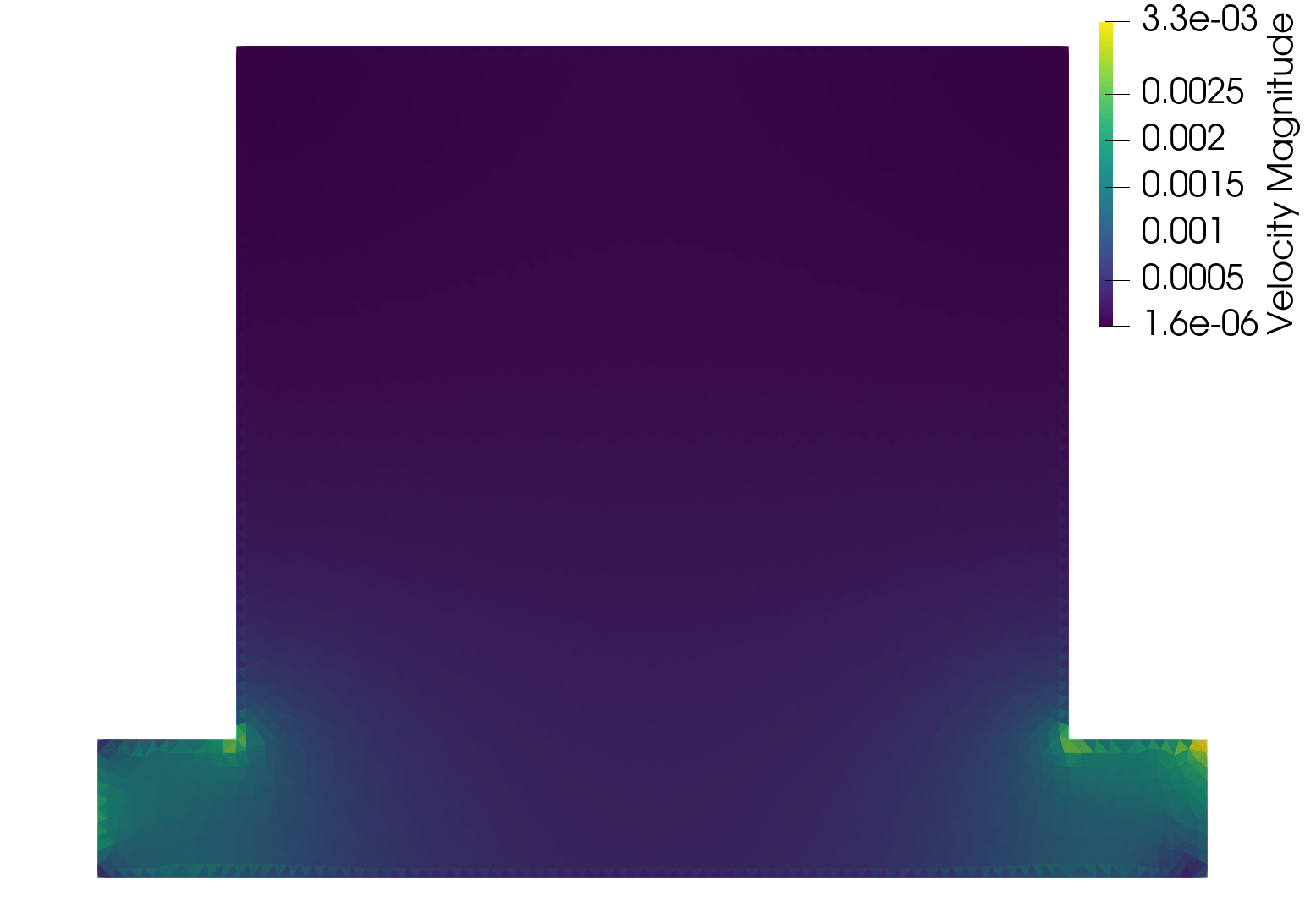}
    \end{subfigure}
    \hfill
    \begin{subfigure}[b]{0.45\textwidth}
        \centering
        \caption*{$t=3$}
        \includegraphics[width=\textwidth]{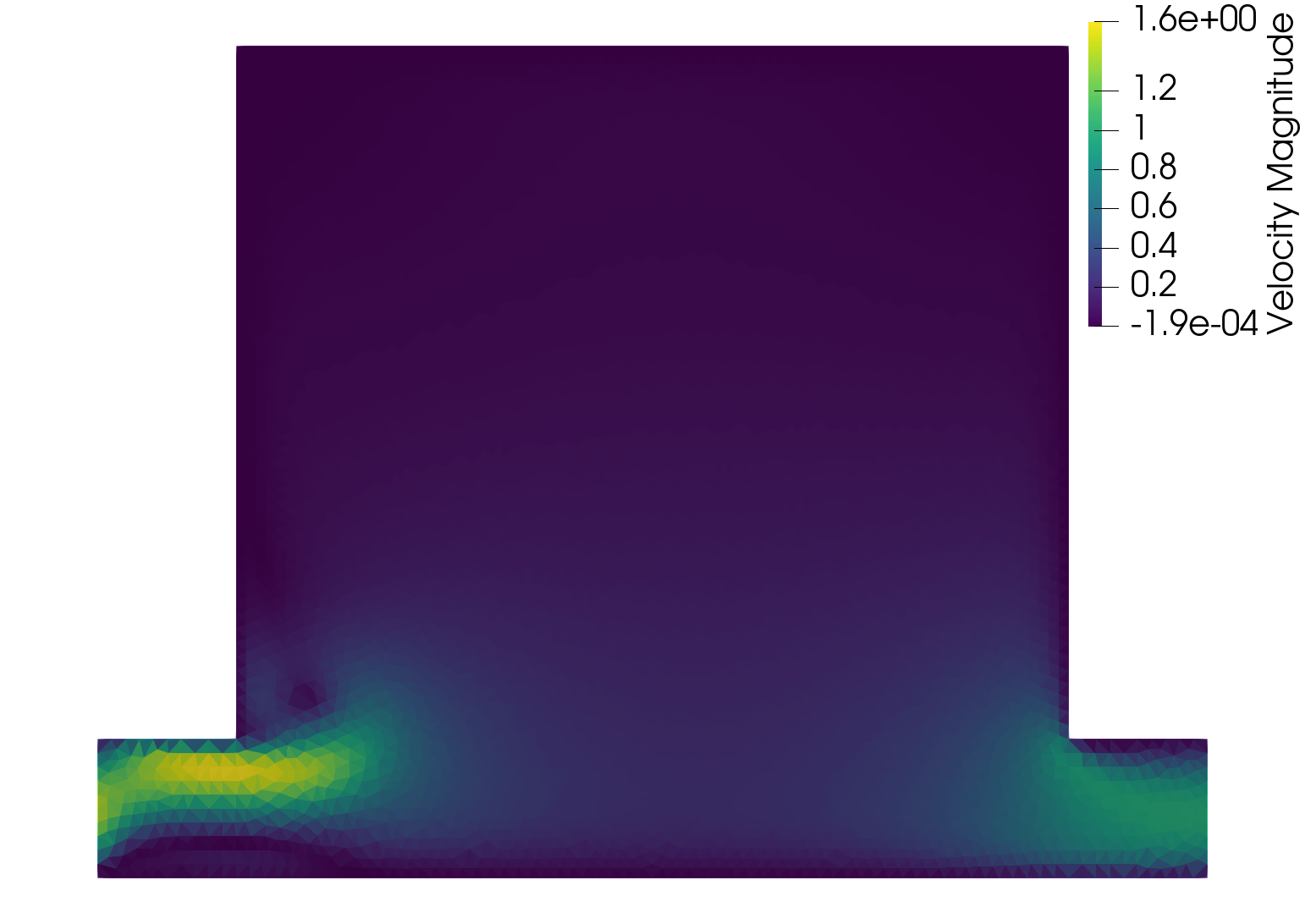}
    \end{subfigure}
    
    \vspace{0.5cm} 
    
    \begin{subfigure}[b]{0.45\textwidth}
        \centering
        \caption*{$t=6$}
        \includegraphics[width=\textwidth]{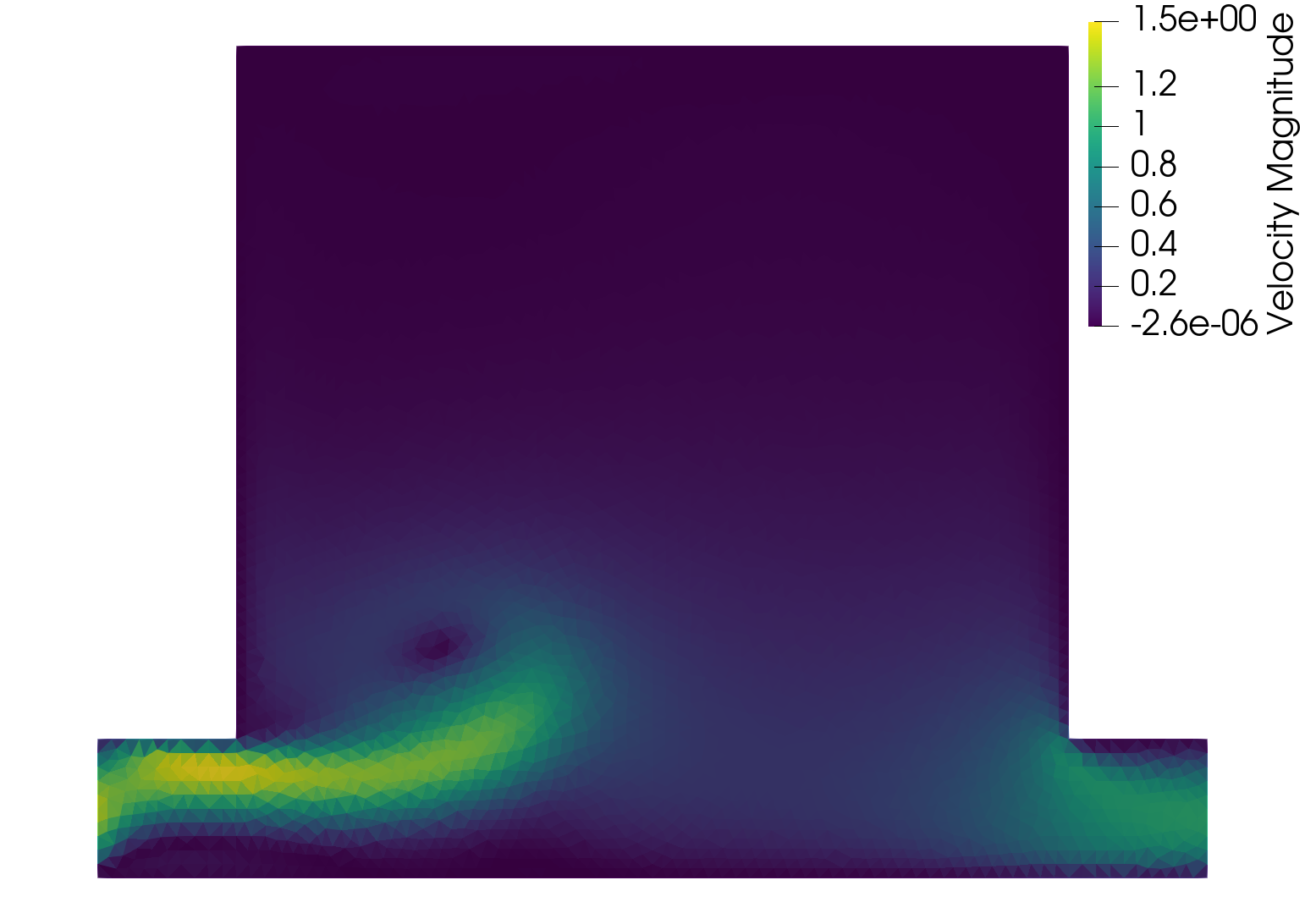}
    \end{subfigure}
    \hfill
    \begin{subfigure}[b]{0.45\textwidth}
        \centering
        \caption*{$t=9$}
        \includegraphics[width=\textwidth]{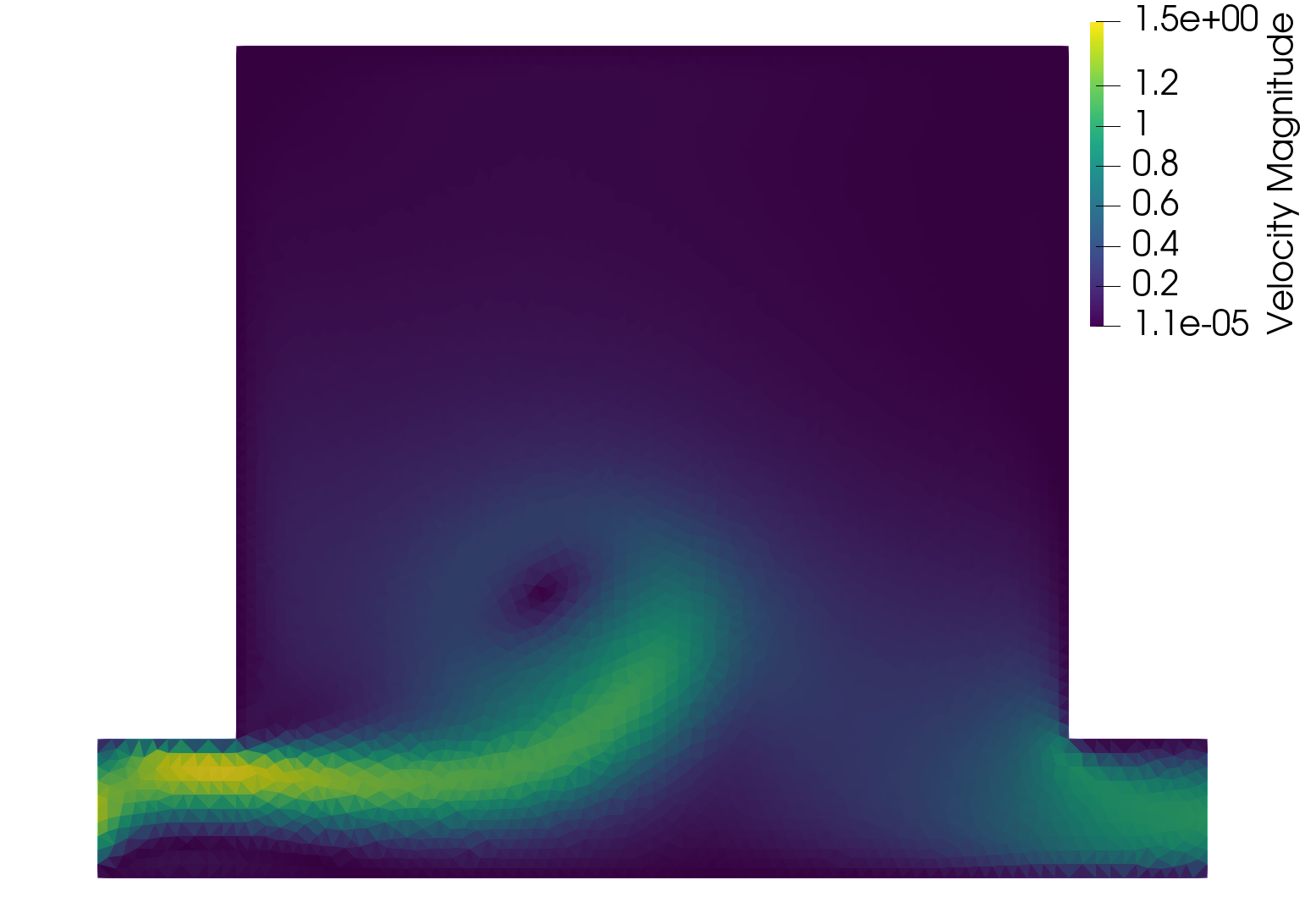}
    \end{subfigure}
    
    \vspace{0.3cm}
    \caption{Contour plots of the reference solution's velocity magnitudes at times 0, 3, 6, and 9.}
    \label{fig:four_images_test2}
\end{figure}

\FloatBarrier

\subsubsection{Higher Reynolds number flow}

In this subsection, we repeat the same test with $\mathcal{R}e=1000$. We set
$\Delta t=1/100$. In the first order, coarser mesh, simulations 2 and 3, the
nonlinear term is treated using the second order, linearly implicit,
skew-symmetric formulation (\ref{eq:bstar})
\[
((2v^{n}-v^{n-1})\cdot\nabla v^{n+1},z)+\frac{1}{2}((\nabla\cdot
(2v^{n}-v^{n-1}))v^{n+1},z),\text{ $\forall z\in X_{h}$}.
\]
The nonlinear term for the reference solution is treated the same as above.
All other experiment parameters and initial conditions remain the same.

\begin{figure}[ptbh]
\centering
\includegraphics[width=0.75\linewidth]{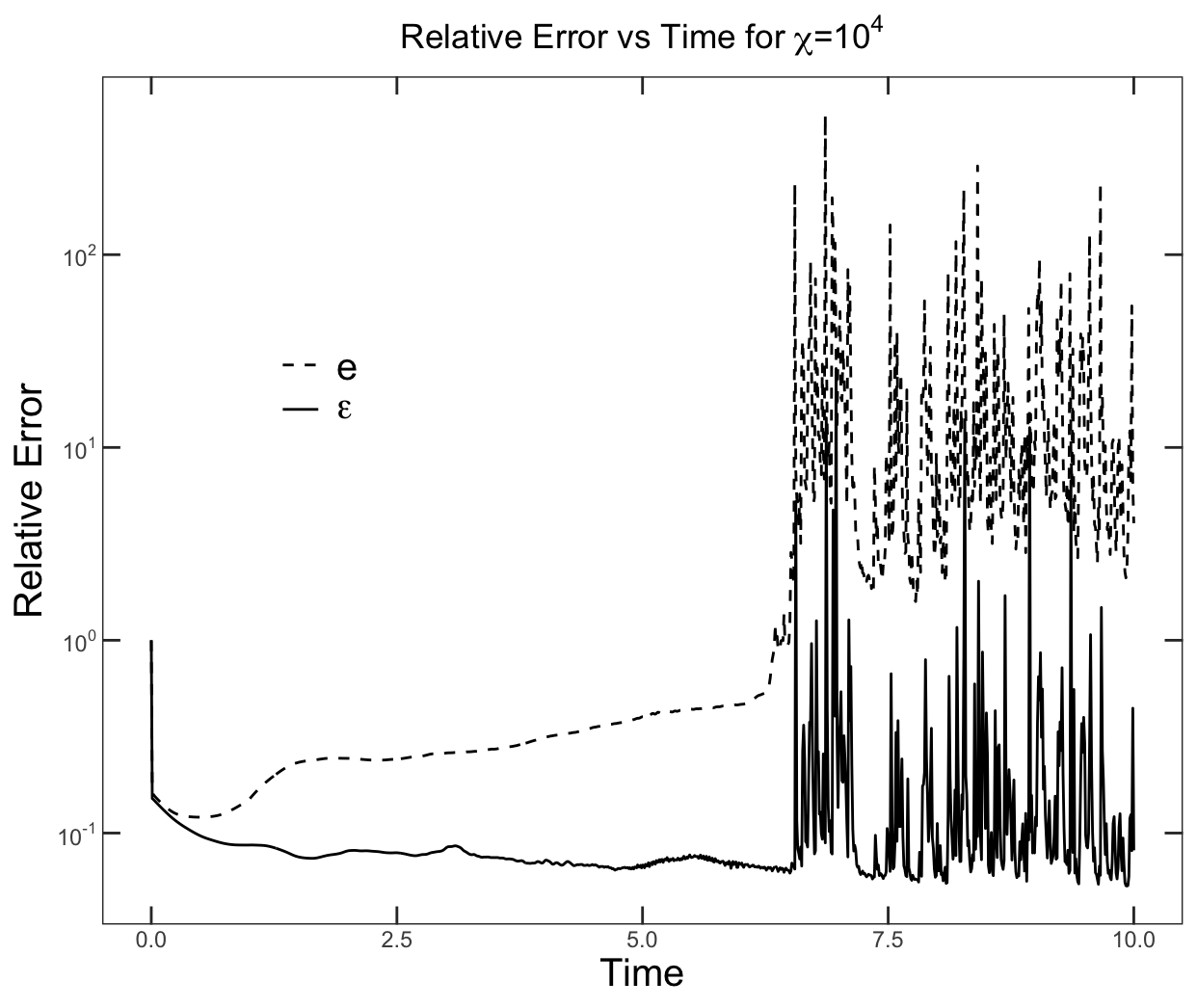}\caption{The error curve
with nudging is below the error curve without nudging except at $t=8.94$.}%
\label{fig:Test2_1}%
\end{figure}

Figure 5.4 shows that the error without nudging starts at $\mathcal{O}(1)$ and
decreases after initial calculation. Then it slowly increases and experiences
exponential growth at around $t=7$ and high-frequency oscillation afterwards.
The error with nudging begins at $\mathcal{O}(1)$ and slowly decreases to
$\mathcal{O}(10^{-1})$. At $t=7$, the error grows exponentially and begins to
oscillate. The high-frequency oscillation for the error suggests a competition
between error growth in some modes and nudging-caused error decay in others.
Except at one timestep ($t=8.94$), the error curve with nudging is below the
error curve without nudging.

\begin{figure}[htbp]
    \centering
    \begin{subfigure}[b]{0.45\textwidth}
        \centering
        \caption*{$t=0$} 
        \includegraphics[width=\textwidth]{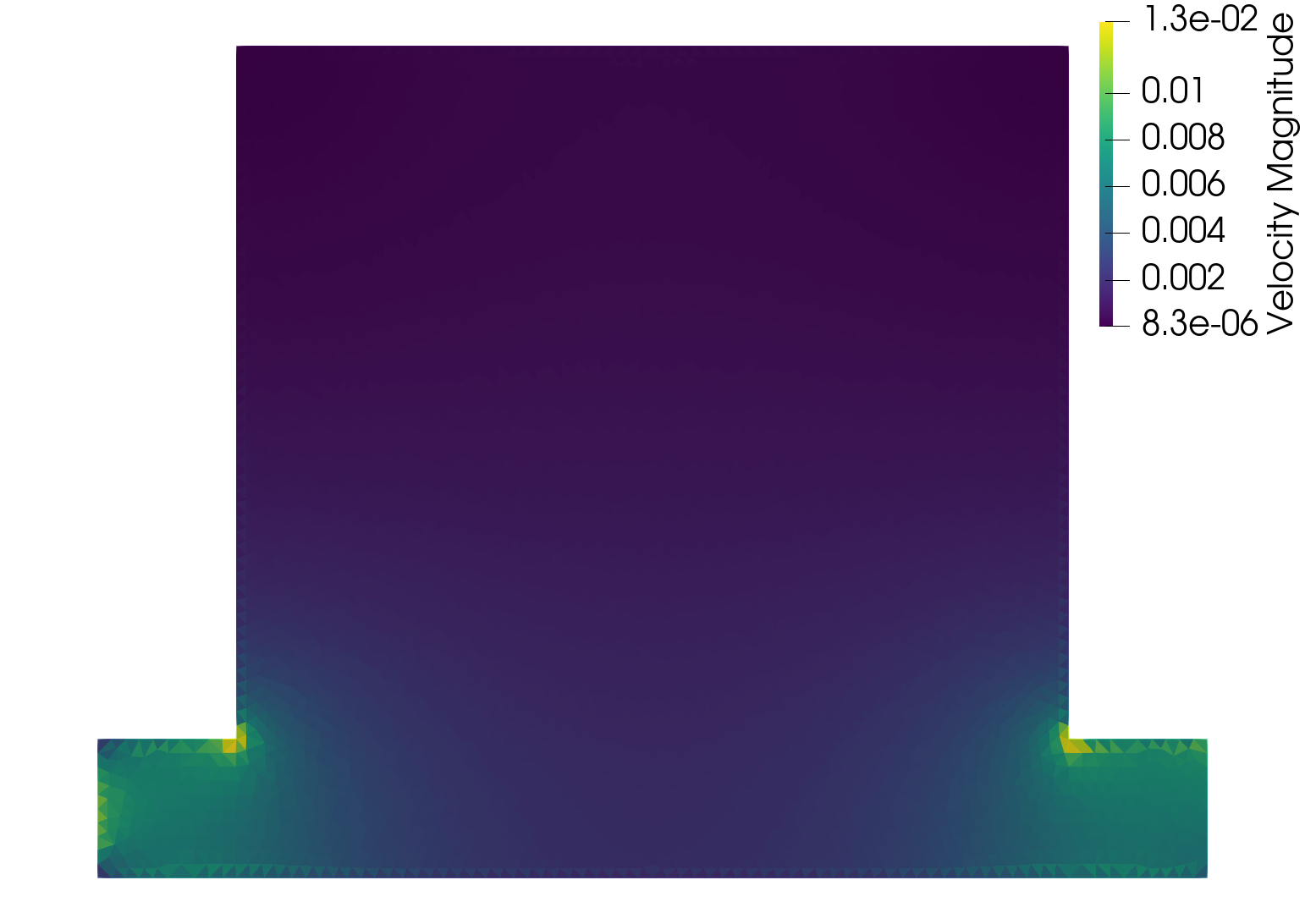}
    \end{subfigure}
    \hfill
    \begin{subfigure}[b]{0.45\textwidth}
        \centering
        \caption*{$t=3$}
        \includegraphics[width=\textwidth]{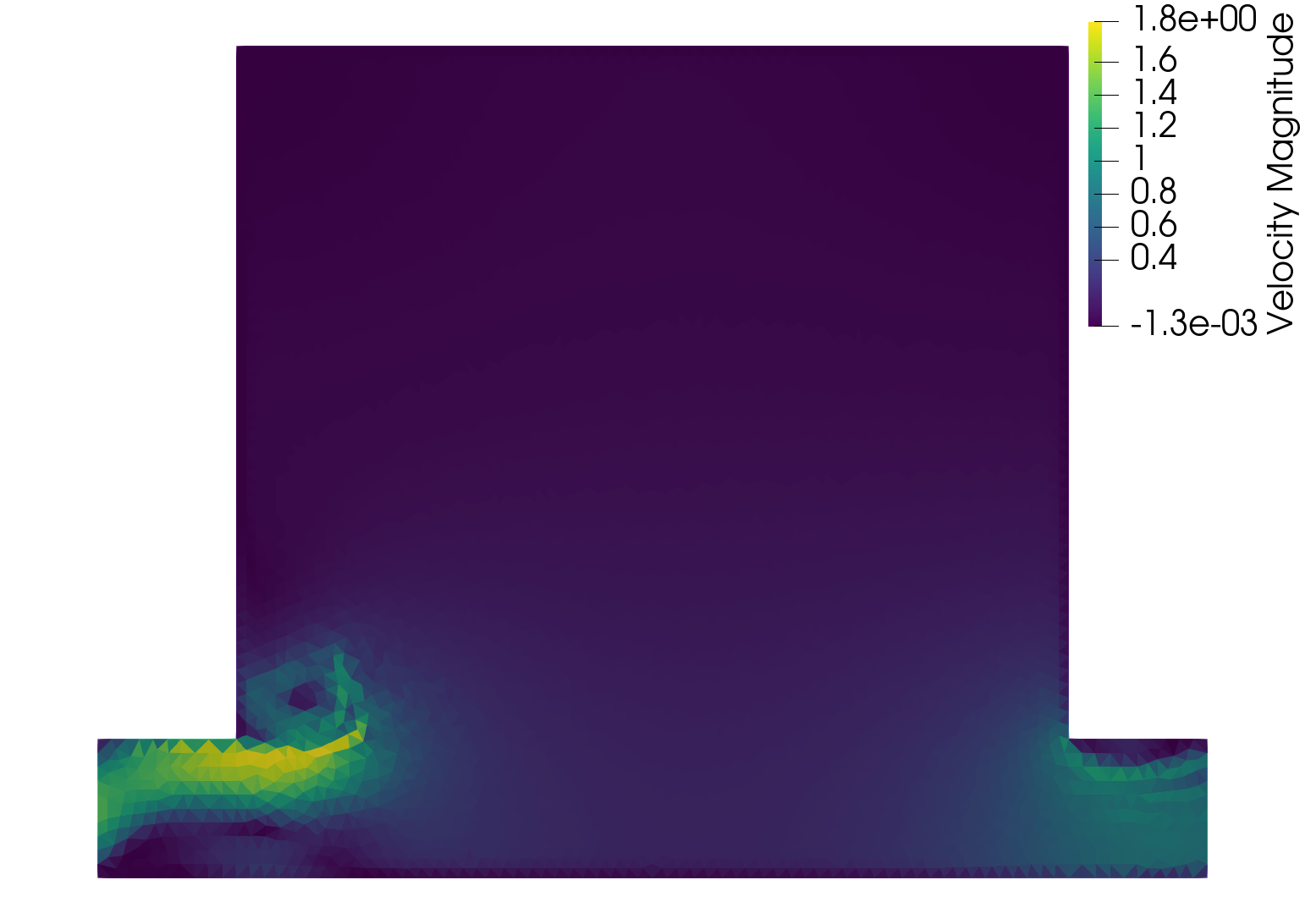}
    \end{subfigure}
    
    \vspace{0.5cm} 
    
    \begin{subfigure}[b]{0.45\textwidth}
        \centering
        \caption*{$t=6$}
        \includegraphics[width=\textwidth]{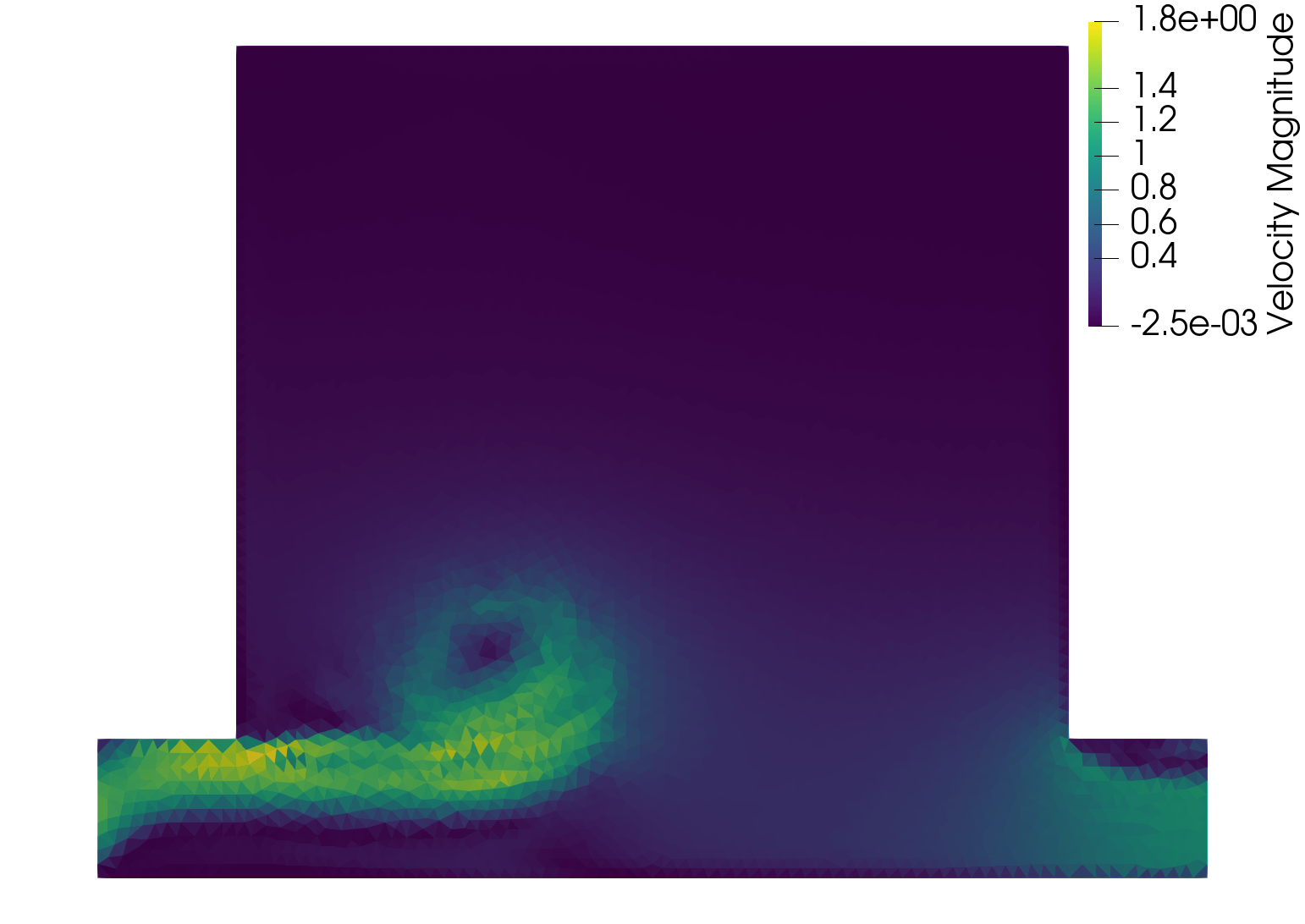}
    \end{subfigure}
    \hfill
    \begin{subfigure}[b]{0.45\textwidth}
        \centering
        \caption*{$t=9$}
        \includegraphics[width=\textwidth]{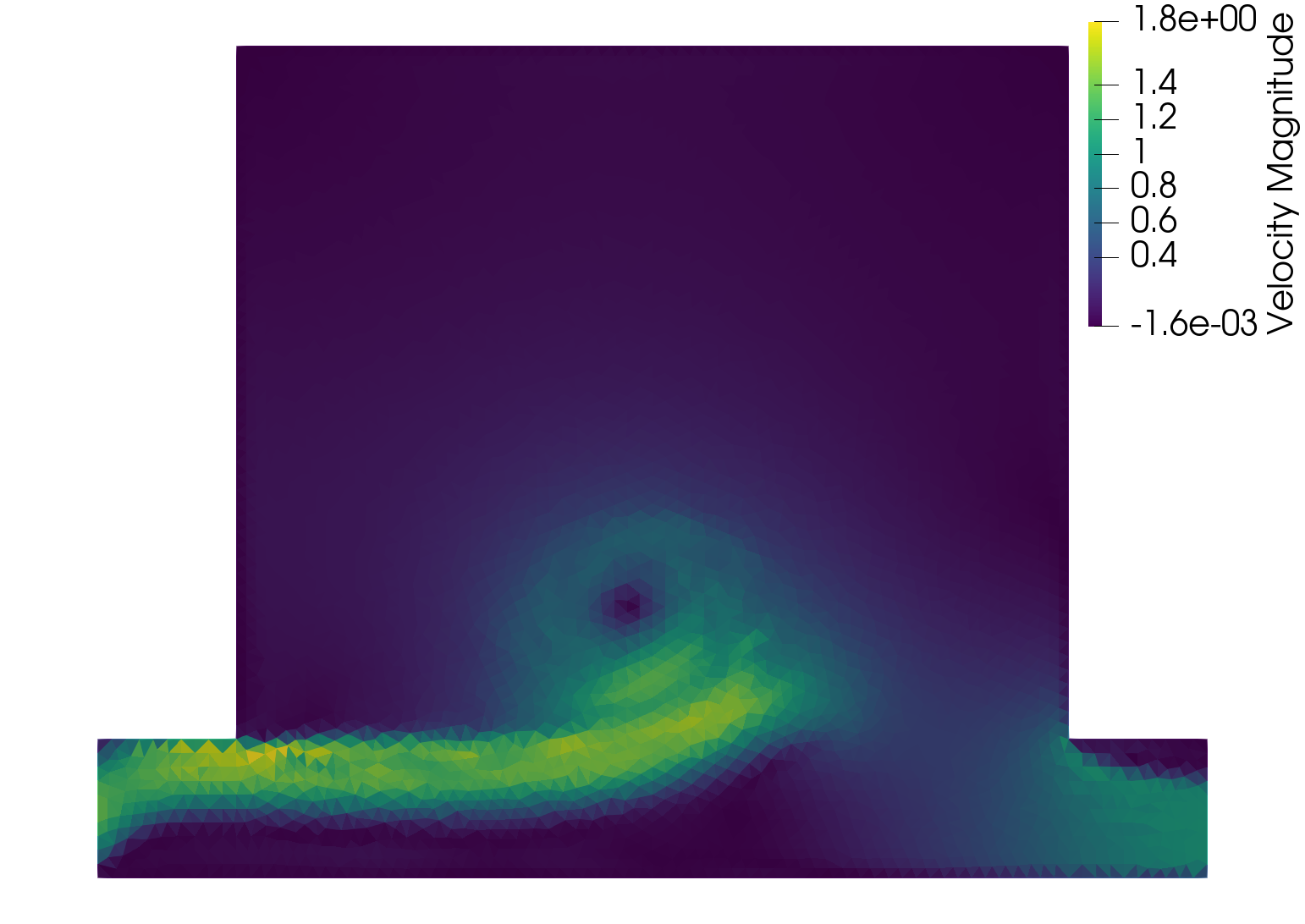}
    \end{subfigure}
    
    \vspace{0.3cm}
    \caption{Contour plots of the reference solution's velocity magnitudes at times 0, 3, 6, and 9.}
    \label{fig:four_images_test2.1}
\end{figure}

\FloatBarrier

\subsection{Flow between offset cylinders}

The third test is adapted from\ \cite{JL14} to data assimilation as in
\cite{ref4}. We consider a 2d flow at a higher Reynolds number without
analytical solution. The domain is a disk containing a smaller, off-centered
obstacle. The disk remains fixed in place. Let $r_{1}=1$ be the outer circle
radius and $r_{2}=0.1$ be the inner circle radius with center at $c=(1/2,0)$.
The domain is defined by
\[
\Omega=\{(x,y):x^{2}+y^{2}\leq r_{1}^{2}\text{ and }(x-c_{1})^{2}%
+(y-c_{2})^{2}\geq r_{2}^{2}\}.
\]
We assume no-slip boundary conditions, and the flow is driven by a rotational
force
\[
f(x,y,t)=(-4y\min(1,t)(1-x^{2}-y^{2}),4x\min(1,t)(1-x^{2}-y^{2}))^{T}.
\]
The rotational body force generates a rotational flow with a vortex street
past the inner disk. This vortex street re-interacts with the inner disk every
revolution. There is also a central , "polar" vortex that grows absorbing
smaller ones and quasi-periodically breaks up again.

The coarse solutions are approximated on a Delaunay generated mesh that has 75
mesh points on the outer boundary and 60 mesh points on the inner boundary. We
set final time $T=10$, time step size $\Delta t=1/100,\nu=10^{-3},L=1,U=1,$
and $Re=\frac{LU}{\nu}$. The initial conditions for $w$ and $v$ are
$w^{0}=v^{0}=(2\times10^{-3},2\times10^{-3})^{T}$. We impose the Dirichlet
boundary condition $w=v=0$ on $\partial\Omega$, the boundary of the inner and
outer disks. We take Taylor-Hood $(P2-P1)$ finite element pair for the
velocity and pressure spaces. The nonlinear convection term uses the
skew-symmetric form (\ref{eq:bstar}) implemented so as to be unconditionally
stable, second order and linearly implicit by
\[
((2v^{n}-v^{n-1})\cdot\nabla v^{n+1},z)+\frac{1}{2}((\nabla\cdot
(2v^{n}-v^{n-1}))v^{n+1},z),\text{ $\forall z\in X_{h}$}.
\]
We set $I_{H}$ to be the $L^{2}$ projection, satisfying the condition
$(I_{H}\phi,\phi)\geq0$, and set $\chi=10^{4}$. The observation spacing $H$ is
$H=0.114736$.

The reference solution $u$ is approximated using DNS, calculated on a finer
mesh with 120 mesh points on the outer boundary and 96 mesh points on the
inner boundary. The DNS solution uses the BDF2 time discretization scheme. The
nonlinear term uses the same skew-symmetric form (\ref{eq:bstar}) as above.
The DNS solution uses the same experiment parameters as used for the coarse
solutions. The initial condition for $u$ is $u(x,y,0)=(10^{-3},10^{-3})^{T}$.

The simulation results are recorded for errors in the weighted norm
$|||\phi|||^{2}$ and computed relative to the reference solution. We also
recorded a solution with accumulated nudging for comparison.

\begin{figure}[ptbh]
\centering
\includegraphics[width=0.75\linewidth]{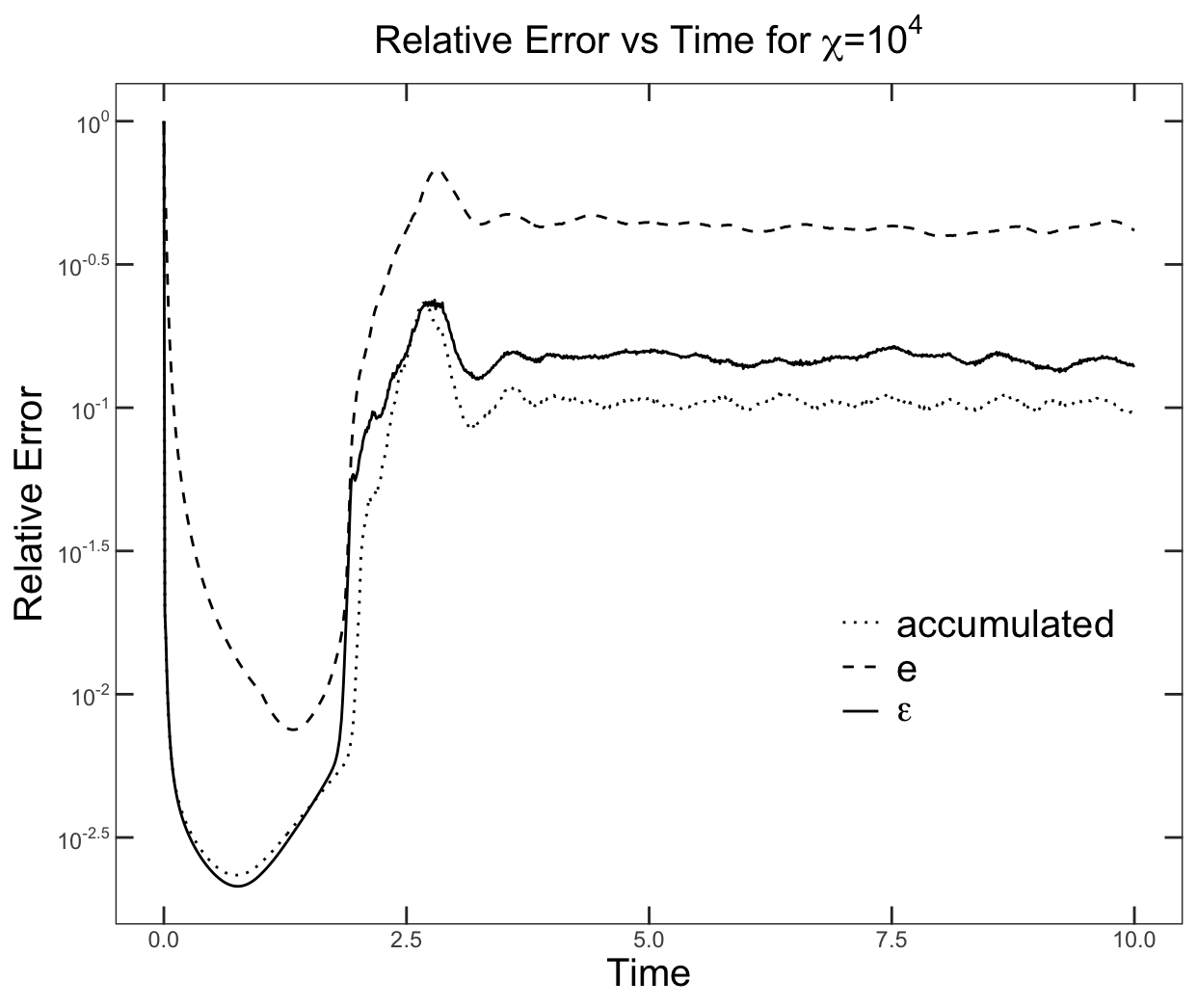}\caption{At each time step,
error with nudging is strictly less than error without nudging.}%
\label{fig:Test3}%
\end{figure}

In this test, the errors decrease quickly after the initial calculations to
$\mathcal{O}(10^{-2})$ for the error curve without nudging and to
$\mathcal{O}(10^{-2.5})$ for the error curve with nudging and accumulated
nudging. The error without nudging increases after $t=2$ and saturates at
$\mathcal{O}(10^{-0.5})$ at $t=3$. The error with nudging increases after
$t=1$ and saturates slightly above $\mathcal{O}(10^{-1})$ at $t=3$. The error
with accumulated nudging is similar to the error with nudging and saturate at
$\mathcal{O}(10^{-1})$ at $t=3$. At each time step, the error curve with
nudging is strictly below the error curve without nudging. This shows that
nudging at each time step also increases the predictability horizon. This
figure shows the effectiveness of accumulated nudging. The error curves
resemble the widely-accepted logistic model.

\begin{figure}[htbp]
    \centering
    \begin{subfigure}[b]{0.45\textwidth}
        \centering
        \caption*{$t=0$} 
        \includegraphics[width=\textwidth]{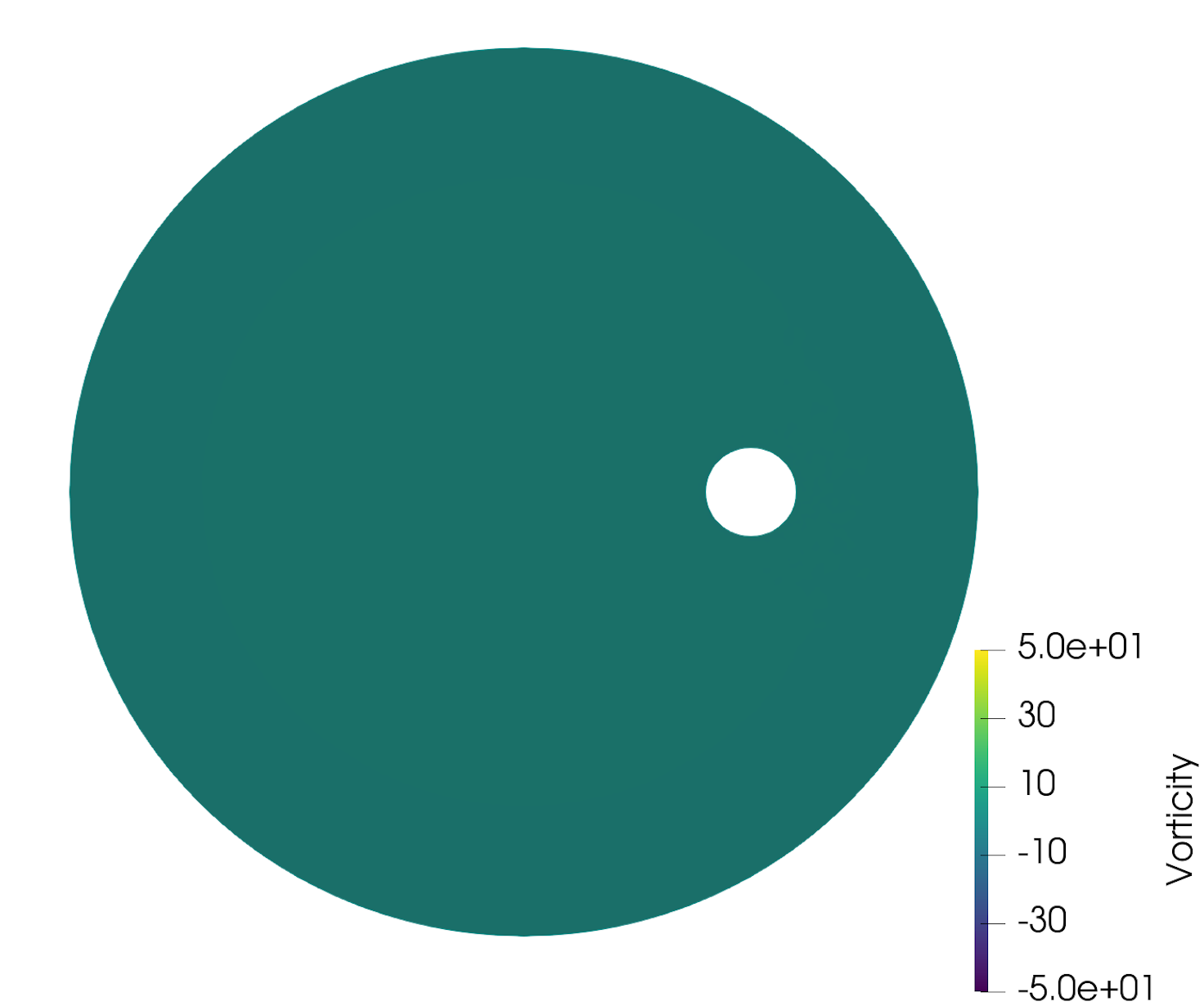}
    \end{subfigure}
    \hfill
    \begin{subfigure}[b]{0.45\textwidth}
        \centering
        \caption*{$t=2$}
        \includegraphics[width=\textwidth]{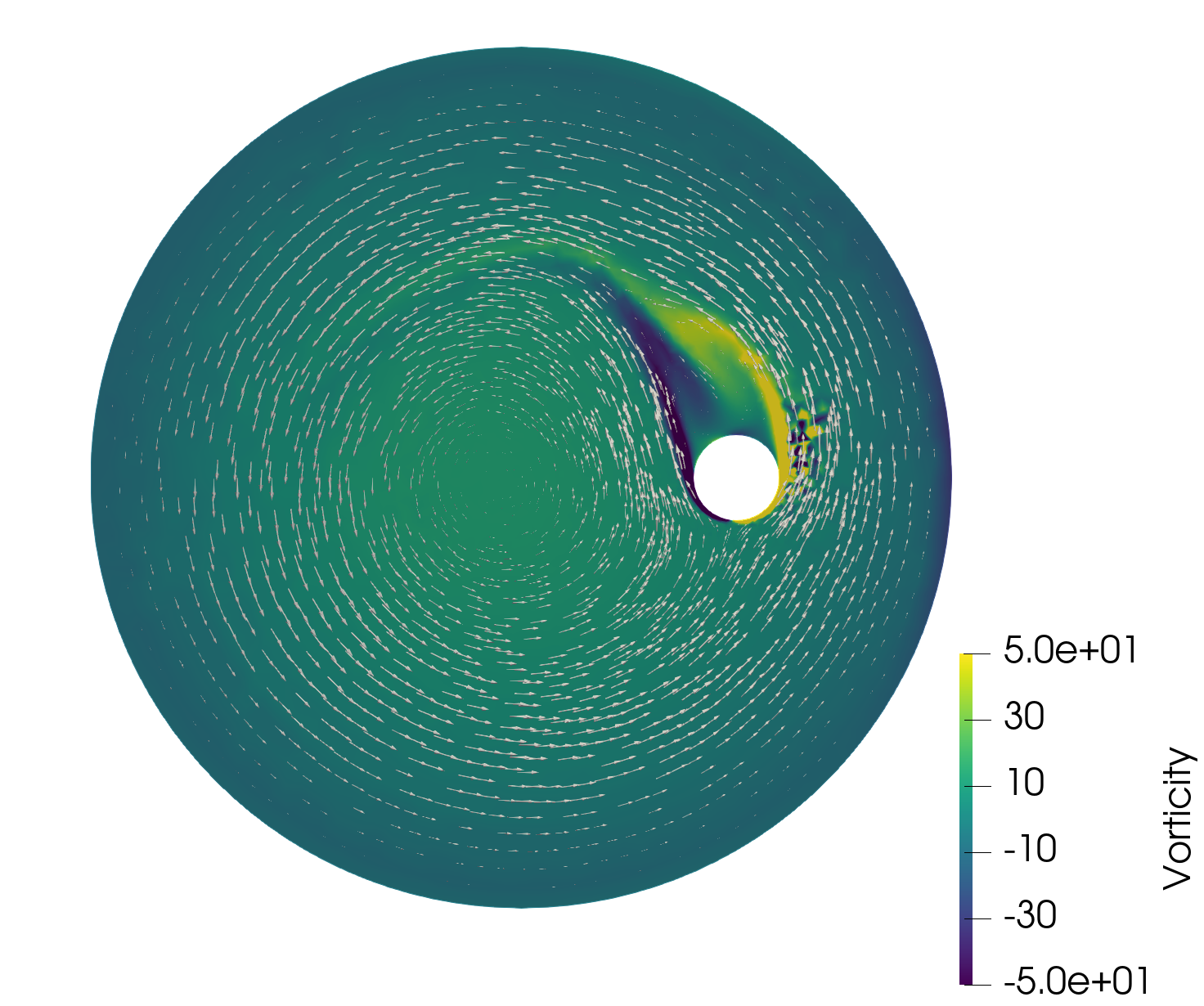}
    \end{subfigure}
    
    \vspace{0.5cm} 
    
    \begin{subfigure}[b]{0.45\textwidth}
        \centering
        \caption*{$t=4$}
        \includegraphics[width=\textwidth]{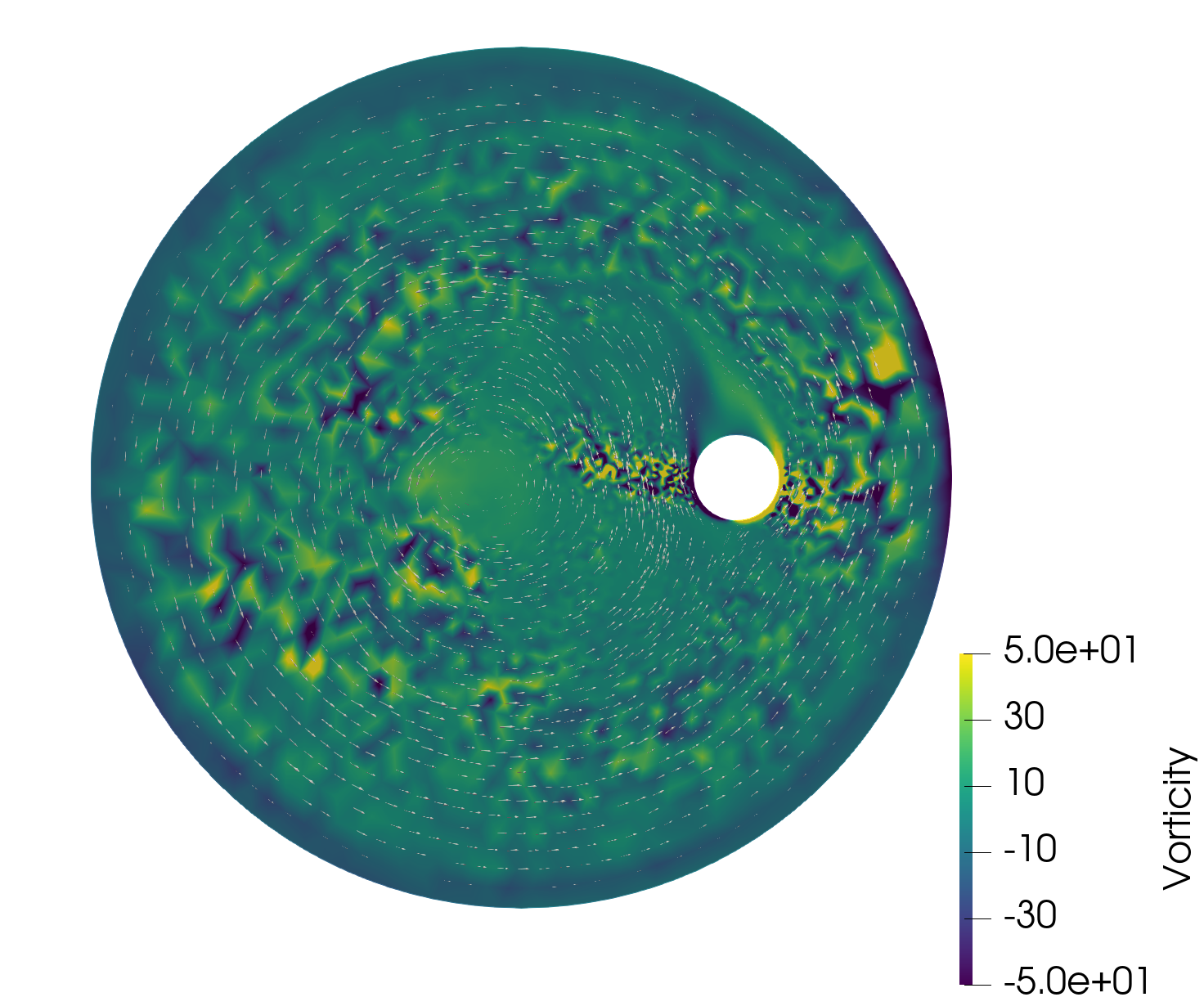}
    \end{subfigure}
    \hfill
    \begin{subfigure}[b]{0.45\textwidth}
        \centering
        \caption*{$t=6$}
        \includegraphics[width=\textwidth]{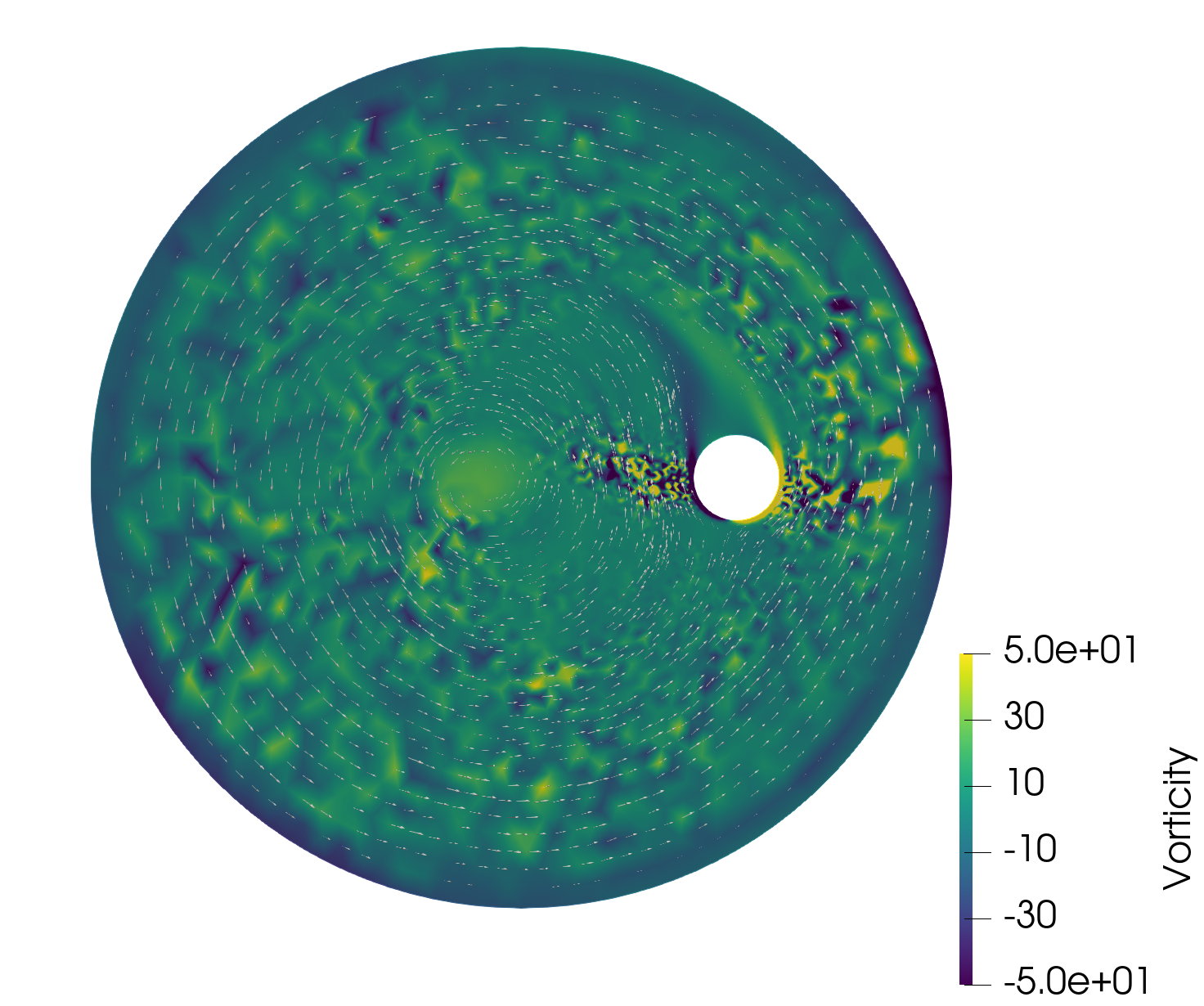}
    \end{subfigure}
    
    \vspace{0.3cm}
    \caption{Vorticity contour plots of the reference solution at times 0, 2, 4, and 6.}
    \label{fig:four_images_test3}
\end{figure}

\FloatBarrier

\section{Conclusions}

\label{sec6}

The main result of \cite{ref1} inspired many subsequent developments with the
aim of establishing an infinite predictability horizon under $H,\chi$
conditions. Interpretation of the $H,\chi$ arising from analysis in terms of
the Reynolds number in \cite{ref5} indicated that they are quite restrictive.
Reynolds number dependent (severe) restrictions are necessary to conclude
$T_{P}=\infty$ for the following heuristic reasons. Dissipation of errors
across all scales is necessary to prove $T_{P}=\infty$. Those in the
dissipation range (scales $\leq\mathcal{O}(\text{micro-scale})$) are
controlled by the viscous term. The nudging term controls scales between
$\mathcal{O}(\text{diam}(\Omega))$ and $\mathcal{O}(H)$. To have all error
growth controlled, so $T_{P}=\infty$, $H$ must thus overlap the dissipation range.

These considerations suggest seeking $T_{P}=\infty$ may be asking for too much
when data is sparse in space (or in time). Elaborating the contribution of
data assimilation to accuracy in a different direction (as herein) by analysis
of CDA with sparse data and moderate parameters, we believe, is an important
research direction. This report has shown that each instance data assimilated
strictly reduces error and thus strictly increases (the finite) predictability
horizon. Establishing stronger and more general results of this type is an
important open problem. It would also be interesting to establish a continuum
(in time) version of the result herein. One approach, inspired by e.g.
\cite{BGS18}, would be to compare the deviation from the NSE solution of the
two delay models%
\begin{align*}
\widetilde{u}_{t}+\widetilde{u}(t-\tau)\cdot\nabla\widetilde{u}(t-\tau
)-\nu\Delta\widetilde{u}+\nabla\widetilde{p} &  =f(x,t),\text{ }\nabla
\cdot\widetilde{u}=0\\
v_{t}+v(t-\tau)\cdot\nabla v(t-\tau)-\nu\Delta v+\nabla q-\chi I_{H}(u-v) &
=f(x,t)\text{, \ }\nabla\cdot v=0.
\end{align*}
Even though this seems very close to the time-discretized model studied
herein, standard estimates seem to fail, leaving an open problem.

\end{document}